\definecolor{darkblue}{RGB}{0,0,160}
\def\eps{\varepsilon}
\def\d{{\rm d}}
\def\R {\mathbb{R}}
\def\M {{\mathrm M}}
\def\Z {{\mathbb Z}}
\def\1 {{\mbox{\boldmath 1}}}
\def \and{\quad\text{and}\quad}
\newcommand{\cic}{\bm}
 \def\Xint#1{\mathchoice
	   {\XXint\displaystyle\textstyle{#1}}%
	   {\XXint\textstyle\scriptstyle{#1}}%
	   {\XXint\scriptstyle\scriptscriptstyle{#1}}%
	   {\XXint\scriptscriptstyle\scriptscriptstyle{#1}}%
	   \!\int}
	 \def\XXint#1#2#3{{\setbox0=\hbox{$#1{#2#3}{\int}$}
	     \vcenter{\hbox{$#2#3$}}\kern-.5\wd0}}
	 \def\avgint{\Xint-}
\def \no#1#2#3 {{\bf #1} (#3), #2.}
\def \eds#1#2#3 {#1, #2, #3.}
\newcounter{counter}
\numberwithin{equation}{section}
\numberwithin{counter2}{section}
\newtheorem{proposition}[subsection]{Proposition}
\newtheorem{theorem}[counter]{Theorem}
\newtheorem{lemma}[subsection]{Lemma}
\theoremstyle{definition}
\newtheorem{definition}[subsection]{Definition}
\newtheorem*{remark*}{Remark}
\newtheorem*{warn*}{A word of warning}
\newtheorem{remark}[subsection]{Remark} 
\theoremstyle{plain}
\numberwithin{figure}{section}
\let\save@mathaccent\mathaccent
\newcommand*\if@single[3]{%
  \setbox0\hbox{${\mathaccent"0362{#1}}^H$}%
  \setbox2\hbox{${\mathaccent"0362{\kern0pt#1}}^H$}%
  \ifdim\ht0=\ht2 #3\else #2\fi
  }
\newcommand*\rel@kern[1]{\kern#1\dimexpr\macc@kerna}
\newcommand*\widebar[1]{\@ifnextchar^{{\wide@bar{#1}{0}}}{\wide@bar{#1}{1}}}
\newcommand*\wide@bar[2]{\if@single{#1}{\wide@bar@{#1}{#2}{1}}{\wide@bar@{#1}{#2}{2}}}
\newcommand*\wide@bar@[3]{%
  \begingroup
  \def\mathaccent##1##2{%
    \let\mathaccent\save@mathaccent
    \if#32 \let\macc@nucleus\first@char \fi
    \setbox\z@\hbox{$\macc@style{\macc@nucleus}_{}$}%
    \setbox\tw@\hbox{$\macc@style{\macc@nucleus}{}_{}$}%
    \dimen@\wd\tw@
    \advance\dimen@-\wd\z@
    \divide\dimen@ 3
    \@tempdima\wd\tw@
    \advance\@tempdima-\scriptspace
    \divide\@tempdima 10
    \advance\dimen@-\@tempdima
    \ifdim\dimen@>\z@ \dimen@0pt\fi
    \rel@kern{0.6}\kern-\dimen@
    \if#31
      \overline{\rel@kern{-0.6}\kern\dimen@\macc@nucleus\rel@kern{0.4}\kern\dimen@}%
      \advance\dimen@0.4\dimexpr\macc@kerna
      \let\final@kern#2%
      \ifdim\dimen@<\z@ \let\final@kern1\fi
      \if\final@kern1 \kern-\dimen@\fi
    \else
      \overline{\rel@kern{-0.6}\kern\dimen@#1}%
    \fi
  }%
  \macc@depth\@ne
  \let\math@bgroup\@empty \let\math@egroup\macc@set@skewchar
  \mathsurround\z@ \frozen@everymath{\mathgroup\macc@group\relax}%
  \macc@set@skewchar\relax
  \let\mathaccentV\macc@nested@a
  \if#31
    \macc@nested@a\relax111{#1}%
  \else
    \def\gobble@till@marker##1\endmarker{}%
    \futurelet\first@char\gobble@till@marker#1\endmarker
    \ifcat\noexpand\first@char A\else
      \def\first@char{}%
    \fi
    \macc@nested@a\relax111{\first@char}%
  \fi
  \endgroup
}
\begin{document}

\title[Singular integrals along lacunary directions in $\mathbb R^n$]{singular integrals along lacunary directions in $\mathbb R^n$}

\author[N. Accomazzo]{Natalia Accomazzo}
\address{Departamento de Matem\'aticas, Universidad del Pais Vasco, Aptdo. 644, 48080
Bilbao, Spain\newline \indent
Department of Mathematics, The University of British Columbia, Room 121, 1984 Mathematics Road,
\newline\indent Vancouver, BC, Canada V6T 1Z2}
\email{\href{mailto:naccomazzo@math.ubc.ca}{\textnormal{naccomazzo@math.ubc.ca}}}
\thanks{N. Accomazzo was partially supported by projects MTM2017-82160-C2-2-P and PGC2018-094528-B-I00 (AEI/FEDER, UE)}

\author[F. Di Plinio]{Francesco Di Plinio} \address{Department of Mathematics, Washington University in St. Louis, One Brookings Drive, St. Louis, \newline \indent MO 63130-4899, USA}
\email{\href{mailto:francesco.diplinio@wustl.edu}{\textnormal{francesco.diplinio@wustl.edu}}}

\thanks{F. Di Plinio was partially supported by the National Science Foundation under the grants   NSF-DMS-1650810 and NSF-DMS-1800628/2000510}

\author[I. Parissis]{Ioannis Parissis}
\address{Departamento de Matem\'aticas, Universidad del Pais Vasco, Aptdo. 644, 48080 Bilbao, Spain and \newline \indent Ikerbasque, Basque Foundation for Science, Bilbao, Spain}

\email{\href{mailto:ioannis.parissis@ehu.es}{\textnormal{ioannis.parissis@ehu.es}}}
\thanks{I. Parissis is partially supported by project PGC2018-094528-B-I00 (AEI/FEDER, UE) with acronym “IHAIP”, grant T1247-19 of the Basque Government and IKERBASQUE}

\subjclass[2010]{Primary: 42B20. Secondary: 42B25}
\keywords{Directional operators, lacunary sets of finite order, Stein's conjecture, Zygmund's conjecture, Radon transforms}

\begin{abstract} A recent result by Parcet and Rogers is that finite order lacunarity characterizes the boundedness of the  maximal averaging operator associated to an infinite set of directions in $\mathbb R^n$. Their proof is based on geometric-combinatorial coverings of fat hyperplanes by two-dimensional wedges. Seminal results by Nagel-Stein-Wainger relied on geometric coverings of $n$-dimensional nature. In this article we  find the sharp cardinality estimate for singular integrals along finite subsets of finite order lacunary sets in all dimensions. Previous results only covered the special case of the directional Hilbert transform in dimensions two and three. The proof is new in all dimensions and relies, among other ideas, on a precise covering of the $n$-dimensional Nagel-Stein-Wainger cone by two-dimensional Parcet-Rogers wedges. 
\end{abstract}
\maketitle

\section{Introduction}

We study sharp cardinality bounds for maximal singular integrals along lines in general ambient Euclidean dimension, when the allowed set of lines is constrained to not support Besicovitch sets. Our main focus is thus on directional singular integrals, defined via the Fourier transform 
as follows.
Let $m$ be a H\"ormander-Mikhlin multiplier on $\mathbb{R}$, that is,
\[
m \in C^{\infty}(\R\backslash \{0\}), \qquad  \sup_{\xi \in \R \setminus \{0\}} |\xi|^\alpha|\partial^{\alpha}m(\xi)|\lesssim_{\alpha} 1, \qquad \forall\alpha\geq 0.\]
For $f\in C^{\infty}_0(\mathbb{R}^n)$ and $v \in {\mathbb{S}^{n-1}}$ consider the directional multiplier
\begin{equation}
\label{e:defTv}
T_vf(x)\coloneqq \int_{\mathbb{R}^n}\widehat{f}(\xi)m(\xi \cdot v)e^{ix\cdot \xi} \, \d\xi,\qquad x\in\R^n.
\end{equation}
Of course, $T_v$ depends on the choice of symbol $m$.  We henceforth suppress this dependence from the notation as the multiplier $m$ may be thought of as fixed throughout the exposition. A most relevant choice is that of the analytic projection $m=\cic{1}_{(0,\infty)}$. In that case, up to a linear combination with the identity operator, $T_v$ is the Hilbert transform along the direction $v$. 
 
For each fixed $v$, $L^p(\R^n)$-boundedness of the directional multiplier $f\mapsto T_v f$ is an immediate consequence of a fiberwise application of the $L^p(\R)$-bound for the  one-dimensional multiplier operator $f\mapsto (m\widehat f)^\vee$ and Fubini's theorem. On the other hand,  $L^p$-bounds for the operator 
\[
f\mapsto T_{v(x)} f(x), \qquad x\in \R^n,
\]
where the directional multiplier is applied along a \emph{variable} choice of lines $x\mapsto v(x)$, are highly nontrivial. The latter question, posed by E.\ Stein during his 1986 ICM plenary lecture \cite{SteinICM}, was initially motivated by the analogy with the corresponding  $L^p$-boundedness problem for the \emph{maximal  averaging operator} along a vector field $v$, which plays the role of the Hardy-Littlewood maximal operator in the context of $L^p$-differentiation along variable lines. The critical Lebesgue exponent is $p=n$, dictated by the existence of   Besicovitch sets of measure zero. Testing on one such set yields the necessary condition that the choice of lines $v$ be a Lipschitz function, and that either the multiplier $m$ or the averaging operator be suitably truncated to spatial scales smaller than the inverse of $\|v\|_{\mathrm{LIP}}$. Whether this condition is also sufficient, at least for weak $L^2$-boundedness in dimension two, is the object of an earlier conjecture  of Zygmund. 

Partial results towards Zygmund's conjecture are due to Bourgain \cite{BOUR}; see also Guo \cite{Guo2}. Partial progress on $L^p$-bounds for the truncated Hilbert transform along a Lipschitz vector field has been obtained, among others, by Lacey and Li \cite{LacLi:mem,LacLi:tams}, Stein and Street \cite{StStMRL},  Bateman and Thiele \cite{BT}, Guo, Thiele, Zorin-Kranich with the second author \cite{DPGTZK}. {We also note that   Demeter   \cite{Dem} proved the sharp $L^2$-bounds for maximal directional H\"ormander-Mikhlin multipliers along finite but arbitrary sets of directions. The proof in \cite{Dem} relies strongly on the vector field result of \cite{LacLi:tams} and the Chang-Wilson-Wolff reduction, the latter of which    we also use in the present paper.}

An alternative way of ensuring $L^p$-bounds for maximal directional averages, and the ensuing differentiation theorems, is to require that the range $\Omega$ of the vector field $v(x)$ does not support Besicovitch sets. In two dimensions, the infinite sets $\Omega$ giving rise to an $L^p$-bounded maximal directional averaging operator  have been fully characterized as finite unions of finite order lacunary sets. The sufficiency in the full range is due to Sj\"ogren and Sj\"olin \cite{SS}, building upon techniques of Nagel, Stein and Wainger \cite{NSW}. The harder necessity statement is due to Bateman \cite{BAT}. In higher dimensions an analogous characterization was only recently achieved by Parcet and Rogers \cite{PR2}. Lacunary sets of directions in the plane appear for instance in the seminal article by C\'ordoba and R.\ Fefferman \cite{CorFef}, as well as in the already mentioned   \cite{NSW,SS}, among many others. The correct generalization to higher dimension is, loosely speaking, as follows: a set $\Omega $ is lacunary if the projection of $\Omega$  on each two-dimensional subspace spanned by a pair  of coordinate vectors is a two-dimensional lacunary set. This definition, detailed in Section \ref{s2} and appearing for the first time in \cite{PR2}, encompasses the previously known examples of \cite{NSW} and of Carbery \cite{Carbery}.   

As anticipated, the main result of this article  is the full singular integral analog of the Parcet-Rogers result.   In particular, we completely close the question, raised for instance in \cite[Section 4]{PR2}, of sharp $L^p(\R^n)$-bounds for the maximal directional multiplier operator
\[
	T_{O}f (x)\coloneqq \sup_{v\in O}|T_vf(x)|,\qquad x\in\R^n,
\]
when $O $ is a finite subset of a finite order lacunary set $\Omega$. Here, sharpness is referred to the dependence of the operator norm of $T_O$ on the cardinality of $O$. In fact, $T_{O}$ is unbounded on every $L^p(\R^n)$ when $O$ is infinite and a   lower bound   $\|T_O\|_{L^p}\gtrsim \sqrt{\log \# O}$ holds for every finite set when $m=\cic{1}_{[0,\infty)}$: this is a result of Laba, Marinelli and Pramanik \cite{LMP}, elaborating on the two-dimensional counterexample of  \cite{Karag}. With the precise definition of a lacunary set of direction given in Definition~\ref{def:lac},  the rigorous statement of our main result is the following.
\begin{theorem}\label{thm:main} Let $n\ge 2$, $1<p<\infty$, and $\Omega \subset {\mathbb{S}^{n-1}}$ be a lacunary set of finite order. Then
\begin{equation} \label{e:main}
\sup_{\substack{O \subset \Omega\\\#O=N}}\|T_O f\|_{L^p(\R^n)}\lesssim  (\log N)^{1/2}\|f\|_{L^p(\R^n)},
\end{equation}
where the implicit constants depends on the dimension $n$, on $p$, and on the order of lacunarity of the set $\Omega$. 
\end{theorem}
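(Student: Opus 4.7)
The argument splits into three stages: a reduction to $p=2$ followed by a Chang-Wilson-Wolff (CWW) decoupling to a single-scale square function, a precise decomposition of each $n$-dimensional directional multiplier into two-dimensional wedges of the Parcet-Rogers type, and finally an appeal to the known sharp two-dimensional cardinality bound for lacunary directions. The first stage gives the $\sqrt{\log N}$ factor; the geometric second stage is the novel contribution; the third stage closes the argument by orthogonality.

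For the first stage, we interpolate the target \eqref{e:main} against the trivial $N^{1/p}$ bound available from a union of $N$ single-direction multipliers, so matters reduce to $p=2$. At this exponent, we apply the CWW reduction exactly as in Demeter \cite{Dem}, replacing the maximal operator $T_O$ by a martingale square function whose operator norm carries the desired $(\log N)^{1/2}$ factor; the underlying product dyadic grid will be chosen adapted to the lacunary hierarchy of $\Omega$. This reduces proving \eqref{e:main} to a single-scale square function estimate of the shape $\|(\sum_{v \in O} |T_v^{\mathrm{sc}} f|^2)^{1/2}\|_{L^2} \lesssim \|f\|_{L^2}$, where $T_v^{\mathrm{sc}}$ is a frequency-localized version of $T_v$ associated with the relevant CWW scale.

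The geometric heart is the second stage. For each $v \in O$, the Hörmander-Mikhlin symbol $m(\xi \cdot v)$ is essentially constant on the Nagel-Stein-Wainger cones around the hyperplane $\{\xi \cdot v =0\}$. By Definition \ref{def:lac}, the projection of $\Omega$ onto each coordinate $2$-plane is a two-dimensional lacunary set of finite order. We assign to each $v$ a coordinate pair $(i(v),j(v))$ according to the dyadic scale of its largest coordinates, and decompose the $n$-dimensional cone around $v$ as a controlled superposition of two-dimensional Parcet-Rogers wedges supported in the $(e_{i(v)},e_{j(v)})$-plane, each multiplied by a smooth bump in the $n-2$ transverse variables. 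Each resulting piece is a tensor product of a two-dimensional directional Hörmander-Mikhlin multiplier in a finite lacunary set with an essentially disjointly supported Littlewood-Paley projection in the remaining coordinates.

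With the decomposition in place, for each of the $O(n^2)$ coordinate pairs one invokes the sharp two-dimensional $\sqrt{\log N}$ bound for maximal singular integrals along finite lacunary sets, and sums in the transverse directions by Littlewood-Paley orthogonality. The main obstacle is the precision required from the wedge covering: it must respect the Hörmander-Mikhlin regularity in the two distinguished coordinates so that the two-dimensional $\sqrt{\log N}$ bound applies verbatim, while being essentially orthogonal across CWW scales in the remaining $n-2$ directions to avoid generating further logarithmic loss when summing. The finite order lacunarity of $\Omega$ is used crucially to bound the density of directions accumulating into each wedge by a quantity matching the CWW loss exactly; relaxing this hypothesis would either force overlap of wedges or spoil the sharpness in $N$.
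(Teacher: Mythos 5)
Your proposal diverges from the paper's argument in ways that are not merely stylistic; two of the steps as stated would fail. First, the reduction to $p=2$ by interpolating \eqref{e:main} against the trivial $N^{1/p}$ bound does not work: interpolating an $L^2$ bound of size $\sqrt{\log N}$ with an $L^{p_0}$ bound of size $N^{1/p_0}$ yields $(\log N)^{\theta/2}N^{(1-\theta)/p_0}$, which retains a positive power of $N$ for every $p\neq 2$. The paper instead proves the $L^2(w)$ estimate for directional $A_2^\Omega$ weights and obtains the full range $1<p<\infty$ by directional extrapolation (Proposition~\ref{prop:extrap}); some device of this kind (or a genuinely $p$-dependent argument) is indispensable.

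Second, and more seriously, your geometric stage assigns to each $v$ a \emph{single} coordinate pair $(i(v),j(v))$ and writes the cone around $v$ as two-dimensional wedges in that one plane tensored with bumps in the $n-2$ transverse variables. The singularity set $v^{\perp}$ is $(n-1)$-dimensional, and a wedge constraining only the ratio $\xi_{i}/\xi_{j}$ cannot cover it: one genuinely needs the union over \emph{all} $n(n-1)/2$ coordinate pairs, organized by inclusion--exclusion as in Lemma~\ref{lem:cover}. Moreover, the tensor-product model you arrive at --- two-dimensional directional multipliers times Littlewood--Paley projections in the remaining coordinates, summed by multi-parameter orthogonality --- is precisely the ``products of inner-outer wedges'' model of \cite{DPPimrn}, for which the paper records a lower bound of order $(\log N)^{\lfloor n/2\rfloor}$; so invoking the two-dimensional theorem pairwise cannot produce the sharp $\sqrt{\log N}$ once $n\geq 4$. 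The missing idea is the splitting $T_v=T_vW_v+T_v(\mathrm{Id}-W_v)$ into a Nagel--Stein--Wainger inner and outer part: the inner (singular) part is controlled with \emph{no} logarithmic loss by a recursive square-function argument over the wedge operators $K_{U,\cic{\ell}}$ and induction on the lacunarity order, while the Chang--Wilson--Wolff reduction is applied only to the outer conical part, whose pieces $T_v^{\mathrm{out}}N_v^jP_t^j$ are dominated pointwise by $\M_{\mathsf{str}}(P_t^jf)$ uniformly in $v$ (Lemma~\ref{lem:pointwise}); this is what makes a one-parameter Littlewood--Paley decomposition, and hence a single factor of $\sqrt{\log N}$, suffice. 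Relatedly, your target single-scale estimate with the sum taken over $v\in O$ is not the square function that the CWW reduction produces (the supremum over $v$ stays inside, and the outer sum runs over scales $t$), and the full operator $T_vP_t^j$ does not satisfy the required uniform bounds without the cone splitting.
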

A comparison with the above mentioned lower  bound \cite{LMP} shows that the $N$-dependence in Theorem \ref{thm:main} is in general best possible.

\begin{remark} Our methods work equally well for the more general case of families of translation invariant directional singular integrals of the form $(R_vf)^\wedge (\xi) \coloneqq m_v(\xi \cdot v)  \widehat f(\xi)$. Here 
\[
{\mathbb{S}^{n-1}}\supset\Omega \ni v \mapsto m_v(\cdot)
\]
is a measurable collection of H\"ormander-Mikhlin multipliers on $\R$ obeying uniform bounds
\[
m_v \in C^\infty(\R\backslash\{0\}),\qquad \sup_{v\in \Omega} \sup_{\xi  \in\R\backslash\{0\}}  |\xi|^{\alpha}|\partial ^\alpha m_v(\xi)|\lesssim_\alpha1 ,\qquad \forall \alpha\geq 0.
\]
 Indeed,   the conclusion of Theorem~\ref{thm:main} holds \emph{verbatim} for the maximal operator
 \[
 R_\Omega f(x)\coloneqq \sup_{v\in\Omega}|R_vf(x)|\] with identical proof. This variation  may be of interest  when dealing with \emph{tree operators} from time-frequency models of directional singular integrals, see for instance \cite{DDP,LacLi:mem}. The corresponding  multipliers differ for each tree, but  they do obey uniform bounds. In the two-dimensional case, maximal  directional multipliers such as $R_\Omega$  have been studied in \cite{KaraLac} for arbitrary finite sets of directions $\Omega\subset S^1$.
\end{remark}

Estimate \eqref{e:main} was proved, in the case of the Hilbert transform only, in dimensions $n=2$ \cite{DPP} and  $n=3$ \cite{DPPimrn}. For $n\geq 4$, the theorem above is new even for the maximal directional Hilbert transform: in fact, Theorem \ref{thm:main} is the first sharp estimate for maximal directional singular integrals in general dimensions. The presence of a generic H\"ormander-Mikhlin symbol $\xi\mapsto m(\xi \cdot v)$ which is not constant in the halfspaces perpendicular to $v$, as well as the availability of more coordinates in dimensions $n\geq 4$, introduce new, and intertwined, essential obstacles that may not be treated with the approach of \cite{DPPimrn,DPP}. 

In fact, the analysis in  \cite{DPPimrn} relied on a model operator for the maximal directional Hilbert transform which may be described heuristically as the maximal truncation to products of two-dimensional  inner-outer wedges from \cite{PR2}. This approach is satisfactory in dimension two and three. However, an adaptation of the counterexamples from \cite{Karag,LMP} yields a lower bound of $(\log N)^{\lfloor \frac n2\rfloor}$ on the $L^p$-norms of the model operator. This is done by constructing   a two-dimensional counterexample  from \cite{Karag} for each of the $\lfloor \frac n2\rfloor$ pairs with distinct entries, out of the $n$ coordinates, in a way that the counterexamples are not interacting with each other; see \cite[Section 6]{DPPimrn} for details. Ultimately, these considerations show that the sharp exponent obtained here in is out of reach for the purely two dimensional approach of \cite{DPPimrn,PR2} and novel ideas are needed. 

The correct approach  in dimension $n\geq 4$ is a new type of geometric covering that combines the  two-dimensional wedges of Parcet-Rogers \cite{PR2} with the full-dimensional cones of Nagel, Stein and Wainger from \cite{NSW}. A rough description of the proof is as follows: we cover the singularity hyperplane $\xi\cdot v=0$  with the exterior of a full dimensional cone. When $v$ comes from a lacunary set, these exterior cones give rise to a bounded square function: this is shown by covering each exterior cone by unions of two-dimensional wedges. The complementary part of the operator is then a maximal conical multiplier which is amenable to a one parameter Littlewood-Paley  square function estimate, via the Chang-Wilson-Wolff inequality. In contrast, the maximal truncation to products of two-dimensional  inner-outer wedges may only be treated with a Littlewood-Paley square function in $\lfloor \frac n2 \rfloor$ parameters, whence the unavoidable $(\log N)^{\lfloor \frac n2\rfloor}$ loss. 

A key component when dealing with higher order lacunarity is the use of recursive-type vector-valued estimates. We find convenient to treat these by means of $L^p(w)$-bounds for directional weights, so that vector-valued estimates follow for free from extrapolation techniques.  These tools are recalled in Section \ref{s:w}. The proof of Theorem \ref{thm:main} is provided in Section \ref{sec:proof}, while the concluding Section \ref{s:q} contains complementary remarks and open questions.

\subsection*{Acknowledgments} Part of this research was carried out during N.\ Accomazzo's two-month research stay at the University of Virginia Mathematics Department, whose   kind hospitality is gratefully acknowledged. F.\ Di Plinio warmly thanks Jongchon Kim for fruitful discussions on  the subject of directional multiplier operators in the plane. 

The authors would like to thank the anonymous referees for an expert reading and suggestions that helped us improve the presentation of the paper.
\section{Lacunary sets of directions and associated frequency projections} \label{s2}
We begin this section  with a thorough definition of  \emph{lacunary sets of directions} in general dimension. We later give a simplified but equivalent version which will be used throughout the paper. In the remainder of the section, we define  frequency projections, associated to lacunary cones or wedges, which will be used to decompose the maximal multipliers along lacunary sets into   tractable pieces.

\subsection{Lacunary sets of directions} Throughout the paper the ambient space is $\R^n$ and we consider sets of directions $\Omega \subset {\mathbb{S}^{n-1}}$. {Note that by possibly adding $O_n(1)$ directions to $\Omega$ we can always assume that $\mathrm{span}(\Omega)=n$; we will do throughout the rest of the paper.} We then define the sets of ordered pairs of indices
\[
\Sigma=\Sigma(n)\coloneqq \{\sigma=(j,k):\, 1\leq j<k\leq n\};
\]
we will typically drop the dependence on $n$ from the notation.

For $\sigma\in\Sigma$ we now consider lacunary sequences $\{\theta_{\sigma,i}\}_{i\in\mathbb{Z}}$ that satisfy $0<\theta_{\sigma,i+1}\le\lambda_{\sigma}\theta_{\sigma,i}$, with $0<\lambda_{\sigma}<1$. Take $\lambda\coloneqq \max_{\sigma}\lambda_{\sigma}$. From here on we will assume that the lacunarity constant $\lambda\in(0,1)$ has a fixed numerical value and all sequences considered below will be lacunary with respect to that fixed value $\lambda$.

Given an orthonormal basis (ONB) of $\mathrm{span}(\Omega)=\R^n$
\[
\mathcal B\coloneqq (e_1,\dots,e_n),
\]
and a choice of lacunary sequences $\{\theta_{\sigma,\ell}\}$ as above we get for each $\sigma\in\Sigma$ a partition of the sphere into sectors
\[
S_{\sigma,\ell}\coloneqq\left\{v\in {\mathbb{S}^{n-1}}:\theta_{\sigma,\ell+1}<\frac{|v\cdot e_{\sigma(2)}|}{|v\cdot e_{\sigma(1)}|}\le \theta_{\sigma,\ell}\right\},\qquad {\mathbb{S}^{n-1}}=\bigcup_{\ell\in\Z}S_{\sigma,\ell}.
\]
Strictly speaking we need to complete the partition by adding the limit set $S_{\sigma,\infty} \coloneqq {\mathbb{S}^{n-1}}\cap (e_{\sigma(1)^\perp} \cup e_{\sigma(2) ^\perp})$. A convenient way to do so is to define $\Z^*\coloneqq \Z\cup\{\infty\}$. We write any $\Omega\subseteq {\mathbb{S}^{n-1}}$ as a disjoint union as follows:
\[
\Omega = \bigcup_{\ell \in\Z^*} \Omega \cap S_{\sigma,\ell}\coloneqq \bigcup_{\ell \in\Z^*} \Omega_{\sigma,\ell},  \qquad \forall \sigma\in \Sigma.
\]
The collection of $|\Sigma(n)|=n(n-1)/2$ partitions of $\Omega$ will be called a \emph{lacunary dissection} of $\Omega$ with parameters $\mathcal B$ and $\{\theta_{\sigma,\ell}\}$. In particular we have that $\{S_{\sigma,\ell}\}$ as defined above is a lacunary dissection of the sphere ${\mathbb{S}^{n-1}}$. We will refer to the sets $\{\Omega_{\sigma,\ell}\}, \{S_{\sigma,\ell}\}$ as \emph{sectors} of a dissection. 

We will also need a finer partition of subsets of the sphere into \emph{cells} which is generated as follows. Consider a lacunary dissection of $\Omega\subseteq {\mathbb{S}^{n-1}}$, namely an ONB $\mathcal B$ and sequences $\{\theta_{\sigma,\ell}\}$. Given $\cic{\ell}=\{\ell_\sigma:\, \sigma\in \Sigma(n)\}\in\Z^\Sigma$ we define
\[
S_{\cic{\ell}}\coloneqq \bigcap_{\sigma\in\Sigma} S_{\sigma,\ell_\sigma	},\qquad \Omega_{\cic{\ell}}\coloneqq \bigcap_{\sigma\in\Sigma} \Omega_{\sigma,\ell_\sigma	},
\]
so that we get the partitions
\[
{\mathbb{S}^{n-1}}=\bigcup_{\cic{\ell}\in \Z^\Sigma}S_{\cic{\ell}},\qquad \Omega_{\cic{\ell}} = \bigcup_{\cic{\ell}\in \Z^\Sigma}\Omega_{\cic{\ell}}. 
\]
{We note here, as in \cite{PR2}*{p. 1540}, that this partition of $\mathbb S^{n-1}$ is in some sense redundant as many of the cells $S_{\cic{\ell}}$ and corresponding sets $\Omega_{\cic{\ell}}$ will be empty. This is unavoidable if one wants a definition of lacunarity in higher dimensions that yields bounded maximal operators.}

We now recall the definition of lacunary sets of directions introduced in \cite{PR2}*{p. 1537}.

\begin{definition}\label{def:lac}Let $\Omega \subset {\mathbb{S}^{n-1}}$ be a set of directions and assume that $\mathrm{span}(\Omega)=\mathbb R^n$. Then $\Omega$ is called lacunary of order $0$ if it consists of a single direction. If $L$ is a positive integer then $\Omega$ is called lacunary of order $L$ if there exists a dissection $\{\Omega_{\sigma,\ell}\}$ of $\Omega$ such that for each $\sigma\in \Sigma(n)$ and $\ell \in \Z^*$, the sector $\Omega_{\sigma,\ell}=S_{\sigma,\ell}\cap \Omega$ is a lacunary set of order $L-1$.  A set $\Omega$ will be called \emph{lacunary} if it is a finite union of lacunary sets of finite order.
\end{definition}

Observe that a set $\Omega$ is lacunary of order $1$ if there exists a dissection $\{\Omega_{\sigma,\ell}\}$ such that each sector $\Omega_{\sigma,\ell}$ contains at most one direction.

We immediately simplify the definition of lacunarity by assuming -without loss of generality- that all dissections are given with respect to lacunary sequences $\theta_{\sigma,\ell}=2^{-\ell}$ for all $\sigma\in\Sigma$, corresponding to $\lambda=1/2$. Furthermore by a standard approximation argument we can dispose of the final set of the partition $\Omega_{\sigma,\infty}$ and work with $\Z$ instead of $\Z^*$. Also, {by a finite splitting}, we can and will assume that $\Omega\subset \{x\in\mathbb{R}^n:\, x_i>0,\; i=1,\dots,n\}$.

\subsection{Nagel-Stein-Wainger frequency projections}\label{sec:NSW} Given a H\"ormander-Mikhlin multiplier $m$ and $v\in {\mathbb{S}^{n-1}}$ we note that the function $\xi\mapsto m(\xi \cdot v)$ is in general singular on the hyperplane $v^\perp$. It is thus convenient, and very effective, to isolate the singularity of the symbol by the use of suitable cones or wedges.

Let $\omega(\xi)$ denote a function that is homogeneous of degree zero and $C^{\infty}$ away from the origin in $\mathbb{R}^n$, and which satisfies 
\[
{\omega(\xi)}\equiv 
\begin{cases} 1,& \quad\text{if}\quad |\xi_1+\dots+\xi_n|<\frac{1}{2n^2} \|\xi\|,\vspace{.7em}\\
0,&\quad\text{if}\quad|\xi_1+\dots+\xi_n|\ge \frac{1}{n^2}\|\xi\|.
\end{cases}
\]
For a direction {$v\in \mathbb S^{n-1}$} we define the smooth frequency projections
\begin{equation}\label{eq:NSWcones}
 W_v f(x)\coloneqq \int_{\R^n}\omega(v_1\xi_1,\dots,v_n\xi_n)\widehat f(\xi) e^{i x\cdot \xi}\,\d \xi,\qquad x\in\R^n.
\end{equation}
These multipliers were first considered in \cite{NSW}. Note that the operator $\mathrm{Id}-W_v$ is a smooth frequency projection onto a cone with axis along $v$. In particular the frequency support of the symbol of $\mathrm{Id}-W_v$ only intersects the $(n-1)$ dimensional hyperplane $v^\perp$ at the origin.

\tdplotsetmaincoords{60}{120}
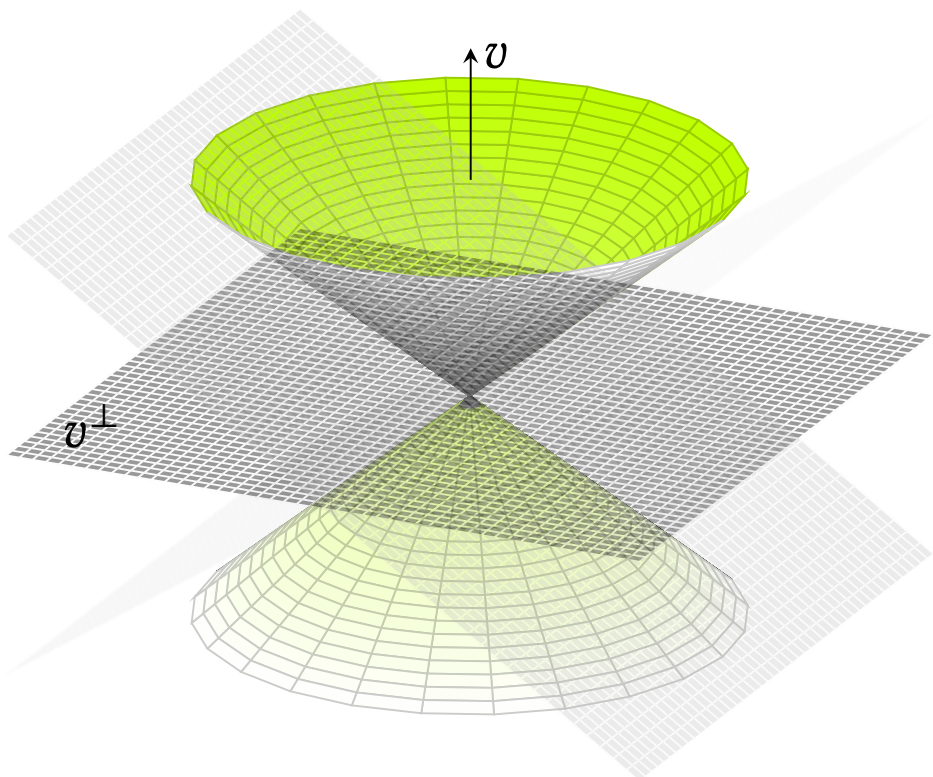
\begin{figure}
\begin{center}
\begin{tikzpicture}
[scale=1.8,
    axis/.style={->,black,thick},
    vector/.style={-stealth,red,very thick},
    vector guide/.style={dashed,red,thick}]
\begin{axis}[
hide axis,
axis lines=center,
domain=0:2,
y domain=0:2*pi,
xmin=-2.5, xmax=2.5,
ymin=-2.5, ymax=2.5,
   mesh/interior colormap=  	{white}{color=(white) color=(lime)},
   colormap/blackwhite,
]
 \addplot3[surf]
  	({x*cos(deg(y))},{x*sin(deg(y))},{x});
    \addplot3[surf]
({-x*cos(deg(y))},{-x*sin(deg(y))},{-x});

\end{axis}

\begin{axis}[
 hide axis,
  axis lines=center,
  zlabel={$e_3$},
   domain=0:2,
   y domain=0:2*pi,
  colormap/blackwhite,
]

\addplot3[surf,fill opacity=0.4, draw opacity=0.01,color=black]
    	({x},{y},{0});
\addplot3[surf,fill opacity=0.4,draw opacity=0.01,color=black]
	      	({-x},{-y},{0});
\addplot3[surf,fill opacity=0.4,draw opacity=0.01,color=black]
	      	({-x},{y},{0});
\addplot3[surf,fill opacity=0.4,draw opacity=0.01,color=black]
		      	({x},{-y},{0});

			    \addplot3[surf,fill opacity=0.3,draw opacity=0.01,color=lightgray]
			      	({x},{-y},{0.5*x});

				\addplot3[surf,fill opacity=0.3,draw opacity=0.01,color=lightgray]
			      	({x},{y},{0.5*x});
				\addplot3[surf,fill opacity=0.3,draw opacity=0.01,color=lightgray]
			      	({-x},{y},{0.5*x});
				\addplot3[surf,fill opacity=0.3,draw opacity=0.01,color=lightgray]
			      	({-x},{-y},{0.5*x});

		    \addplot3[surf,fill opacity=0.3,draw opacity=0.01,color=lightgray]
		      	({x},{-y},{-0.5*x});

			\addplot3[surf,fill opacity=0.3,draw opacity=0.01,color=lightgray]
		      	({x},{y},{-0.5*x});
			\addplot3[surf,fill opacity=0.3,draw opacity=0.01,color=lightgray]
		      	({-x},{y},{-0.5*x});
			\addplot3[surf,fill opacity=0.3,draw opacity=0.01,color=lightgray]
		      	({-x},{-y},{-0.5*x});

			\node at (25,200,1000) {$v^\perp$};
			\coordinate (A) at (255,260,2350);
			\coordinate (B) at (255,260,2950);
			 \draw [->,>=stealth] (A) -- (B);
 			\node at (265,305,2890) {$v$};

]
\end{axis}

\end{tikzpicture}
\end{center}
\caption{The exterior of a Nagel-Stein-Wainger cone with axis $v$, corresponding to the operator $\mathrm{Id}-W_v$, covers the singularity $v^\perp$; a Parcet-Rogers wedge for a single $\sigma$ is also pictured.}
\end{figure}

\subsection{Parcet-Rogers frequency projections} Following \cite{PR2} we define for $\sigma\in\Sigma$ and $\ell\in\Z$ the following two-dimensional wedges
\[
\Psi_{\sigma,\ell}\coloneqq \left\{\xi\in \mathbb{R}^n\backslash e_{\sigma(2)}^{\perp}:\, \frac{2^{-(\ell+1)}}{n}\le\frac{-\xi_{\sigma(1)}}{\xi_{\sigma(2)}}<2^{-\ell}n \right\},
\]
and
\[
\widetilde{\Psi}_{\sigma,\ell}\coloneqq\left\{\xi\in\mathbb{R}^n\backslash e_{\sigma(2)}^{\perp}:\,\frac{2^{-(\ell+1)}}{n+1}\le\frac{-\xi_{\sigma(1)}}{\xi_{\sigma(2)}}<2^{-\ell}(n+1)\right\}.
\]
Take $\kappa$ to be a bump function such that
\[
\kappa\equiv  
\begin{cases} 1 & \textrm{on}\quad [{1}/{2n},n] ,\vspace{.7em}
	\\
	 0 & \textrm{on}\quad [{1} /{2(n+1)},n+1]^c,
\end{cases} 
\] 
and define the Fourier multiplier operators $K_{\sigma,\ell}$ with symbols 
\[
\kappa_{\sigma,\ell}(\xi)\coloneqq \kappa \left(-\frac{\xi_{\sigma(1)}}{2^{-\ell}\xi_{\sigma(2)}}\right),\qquad K_{\sigma,\ell}f\coloneqq (\kappa_{\sigma,\ell}\hat f)^\vee.
\] 
Note that $\kappa_{\sigma,\ell}$ is smooth, identically $1$ on the wedge $\Psi_{\sigma,\ell}$, and identically $0$ off $\,\widetilde{\Psi}_{\sigma,\ell}$. For a subset $\varnothing\neq U\subseteq \Sigma(n)$ we define 
\[
K_{U,\cic{\ell}}\coloneqq \prod_{\sigma \in U}K_{\sigma,\ell_{\sigma}}
\]
with the product symbol being used to denote for compositions of operators in the display above.

The main geometric observation relating the Nagel-Stein-Wainger cones with the Parcet-Rogers wedges is contained in the following lemma, which is an elaboration of a similar statement from \cite{PR2}*{Proof of Theorem A}.

\begin{lemma}[{Inclusion-Exclusion formula}]\label{lem:cover} Let $\{\Omega_{\sigma,\ell}\}$ be a lacunary dissection of {$\Omega\subset {\mathbb{S}^{n-1}}$} and suppose that $v\in \Omega_{\cic{\ell}}$ for some $\cic{\ell}\in \Z^\Sigma$ with $\cic{\ell}=\{\ell_\sigma:\,\sigma\in\Sigma\}$. Then
\[
W_vf = \sum_{\varnothing\neq U\subseteq {\Sigma(n)} }(-1)^{|U|+1} W_vK_{U,\cic{\ell}}f.
\]
\end{lemma}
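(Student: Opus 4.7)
The identity is a standard inclusion–exclusion, so the game is to reduce to a pointwise claim about the symbols of the operators and then verify a covering property for the Parcet–Rogers wedges $\Psi_{\sigma,\ell_\sigma}$ against the exterior of the Nagel–Stein–Wainger cone determined by $v$. More precisely, writing $\omega_v(\xi) := \omega(v_1\xi_1,\dots,v_n\xi_n)$ for the symbol of $W_v$, the algebraic identity
\[
1-\prod_{\sigma\in\Sigma}\bigl(1-\kappa_{\sigma,\ell_\sigma}(\xi)\bigr) \;=\; \sum_{\varnothing\neq U\subseteq \Sigma}(-1)^{|U|+1}\prod_{\sigma\in U}\kappa_{\sigma,\ell_\sigma}(\xi)
\]
shows that the claim would follow from the pointwise identity
\[
\omega_v(\xi) \;=\; \omega_v(\xi)\Bigl[\,1-\prod_{\sigma\in\Sigma}\bigl(1-\kappa_{\sigma,\ell_\sigma}(\xi)\bigr)\Bigr],\qquad \xi\in\R^n,
\]
by multiplying by $\widehat f$ and inverting the Fourier transform. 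So it suffices to prove the covering statement: for every $\xi$ in the support of $\omega_v$ there exists at least one pair $\sigma\in\Sigma$ such that $\kappa_{\sigma,\ell_\sigma}(\xi)=1$, i.e.\ $\xi\in\Psi_{\sigma,\ell_\sigma}$.

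\textbf{Verifying the covering.} I set $t_i:=v_i\xi_i$, so that $\xi\in\operatorname{supp}\omega_v$ translates into $|t_1+\cdots+t_n|< \tfrac{1}{n^2}\|t\|_2$. Normalizing $\|t\|_\infty=1$ and picking $i^\star$ with $t_{i^\star}=\pm 1$ (say $+1$ by sign symmetry), the bound $|{\textstyle\sum}t_i|<n^{-3/2}$ forces $\sum_{i\neq i^\star}t_i\in(-1-n^{-3/2},\,-1+n^{-3/2})$, and hence the smallest entry $t_{j^\star}:=\min_i t_i$ satisfies $-1\le t_{j^\star}\le -\tfrac{1-n^{-3/2}}{n-1}\le -\tfrac{1}{n}$. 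Therefore $t_{i^\star},t_{j^\star}$ have opposite signs and both lie in $[\tfrac{1}{n},1]$ in absolute value, so $-t_{j^\star}/t_{i^\star}\in[\tfrac1n,n]$ and symmetrically $-t_{i^\star}/t_{j^\star}\in[\tfrac1n,n]$.

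\textbf{Matching the wedge definition.} Let $j:=\min\{i^\star,j^\star\}<k:=\max\{i^\star,j^\star\}$ and $\sigma:=(j,k)$. Since $v\in\Omega_{\cic\ell}\subset S_{\sigma,\ell_\sigma}$ and all directions are assumed to have positive coordinates, $v_k/v_j\in (2^{-(\ell_\sigma+1)},2^{-\ell_\sigma}]$. From $\xi_i=t_i/v_i$ I compute
\[
-\frac{\xi_j}{\xi_k}\;=\;-\frac{t_j}{t_k}\cdot\frac{v_k}{v_j}\;\in\;\Bigl[\tfrac{1}{n},\,n\Bigr]\cdot\bigl(2^{-(\ell_\sigma+1)},2^{-\ell_\sigma}\bigr]\;\subseteq\;\bigl[2^{-\ell_\sigma}/(2n),\,2^{-\ell_\sigma} n\bigr],
\]
which is exactly the condition $\xi\in\Psi_{\sigma,\ell_\sigma}$, so $\kappa_{\sigma,\ell_\sigma}(\xi)=1$ by the construction of $\kappa$. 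This establishes the covering, and the inclusion-exclusion identity above then yields the lemma.

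\textbf{Main obstacle.} The argument is essentially a bookkeeping exercise once one pins down the quantitative link between the aperture $1/n^2$ of the NSW cone and the multiplicative constants $1/(2n)$ and $n$ defining the wedges $\Psi_{\sigma,\ell}$. The delicate point is step two: obtaining the uniform lower bound $|t_{j^\star}|\ge 1/n$ (not just the weaker $\ge 1/(2n)$ that one gets from a crude Cauchy–Schwarz), which is precisely what is needed so that the estimate for $-\xi_j/\xi_k$ lands inside the closed interval $[2^{-\ell_\sigma}/(2n),\,2^{-\ell_\sigma} n]$ where $\kappa_{\sigma,\ell_\sigma}\equiv 1$, rather than merely inside the larger interval where $\kappa$ could still be strictly less than $1$.
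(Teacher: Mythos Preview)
Your proof is correct and follows the same route as the paper's: both reduce the identity, via inclusion--exclusion on the symbols, to the covering statement that every $\xi\in\operatorname{supp}\omega_v$ lies in some wedge $\Psi_{\sigma,\ell_\sigma}$, and then use that $\kappa_{\sigma,\ell_\sigma}\equiv 1$ there. The only difference is that the paper outsources the covering step to an earlier reference (passing through the slightly larger cone $C_v=\{\xi:|\xi\cdot v|<\tfrac1n\max_k|\xi_k v_k|\}$), whereas you supply the pigeonhole argument directly; your strict bound $t_{j^\star}<-\tfrac{1-n^{-3/2}}{n-1}<-\tfrac1n$ (for $n\ge2$) in fact lands $-\xi_j/\xi_k$ strictly inside $\Psi_{\sigma,\ell_\sigma}$, so the endpoint worry you flag does not arise.
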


\begin{proof} Writing $(W_v f)^\wedge\eqqcolon  \omega_v \widehat f$ we note that the support of $\omega_v$ satisfies
	\[
	\mathrm{supp}\, \omega_v \subseteq \Big\{\xi\in\R^n:\, |\xi \cdot v|<\frac1n \max_{1\leq k\leq n}|\xi_k v_k|\Big\}\eqqcolon C_v.
	\]
We read from  \cite{DPPimrn}*{Proof of Lemma 3.2}, together with the assumption that $v\in \Omega_{\cic{\ell}}$,  that
\[
C_v \subseteq   \bigcup_{\sigma\in\Sigma}\Psi_{\sigma,\ell_\sigma}.
\]
The conclusion of the lemma follows from the display above, the inclusion-exclusion formula, and the fact that for each $\sigma\in\Sigma$ and $\ell\in \Z$ the  operator $K_{\sigma,\ell}$ has symbol $\kappa_{\sigma,\ell}$ which is identically 1 on  $\Psi_{\sigma,\ell}$.
\end{proof}

\section{Some auxiliary results} \label{s:w}
We will need some known facts from the weighted theory of maximal directional singular integrals, and in particular, a weighted version of the Chang-Wilson-Wolff principle. The latter allows us to commute a maximum over $N$ multiplier operators with certain  Littlewood-Paley projections, with a controlled loss in $N$. We refer to \cite{DPPimrn}*{\S4}  for a detailed exposition and just recall here the relevant statements.

\subsection{Directional weighted norm inequalities}In order to state these results we briefly introduce directional $A_p$-weights. Given a closed set of directions $\Omega\subset {\mathbb{S}^{n-1}}$ and a non-negative, continuous function $w$ on $\R^n$, we say that $w$ belongs to $A_p ^\Omega$ if $w$ belongs to the one-dimensional class $A_p(\ell_{v})$ for all lines $\ell_{v}$, ${v}\in \Omega$, with uniform bounds. More precisely, if we define
the segments
\[
I(x,t,{v})\coloneqq\{x+s{v}:\, |s|<t\}\subset \R^n,\qquad x\in\R^n, \quad t>0,\quad {v}\in\Omega,
\]
then
\[
[w]_{A_p ^\Omega}\coloneqq \sup_{\substack{x\in\R^n,t>0\\{v}\in\Omega}} \bigg(\,\avgint_{I(x,t,{v})} w\bigg) \bigg(\,\avgint_{I(x,t,{v})} w^{-\frac{1}{p-1}}\bigg),
\]
and $A_p ^\Omega\coloneqq\{w\in C(\R^n):\, [w]_{A_p ^\Omega}<\infty\}.$ Note that we need to consider continuous weights in order to make sense of their restrictions to line segments in $\R^n$. This turns out to be more of a technical nuisance rather than substantial limitation and it is inconsequential for our applications. Finally we write
\[
A_\infty ^\Omega \coloneqq \bigcup_{p>1} A_p ^\Omega.
\]

In the special case that $\Omega=\{e_1,\ldots,e_n\}$ is the standard coordinate basis we just write $A_p ^*$ for the corresponding $A_p$-class.

The following weighted version of the Marcinkiewicz multiplier theorem, due to Kurtz, can be used in several occasions where we need to prove weighted norm inequalities along lacunary sets of directions. We recall the statement of the result for future reference.

\begin{proposition}[Kurtz \cite{kurtz}] Let $m$ be a $C^\infty$ function in $\R^n$ away from the coordinate hyperplanes and assume that $\|m\|_\infty\leq B$. Suppose that for all $0< k \leq n$ we have
	\[
	{\sup_{\xi_{k+1},\ldots,\xi_n}\int_{\rho}\Big|\frac{\partial ^k m(\xi)}{\partial_{\xi_1} \cdots\partial_{\xi_k}}\Big|\,\d\xi_1\cdots\d \xi_k\leq B}
	\]
for all dyadic rectangles $\rho\subset \R^k$, and any permutation of the coordinates $(\xi_1,\ldots,\xi_n)$. Then for all $p\in(1,\infty)$ and all $w\in A_p ^*$ the multiplier operator $T_m(f)\coloneqq (m \widehat f)^\vee$ satisfies the weighted bounds
\[
{\|T_m\|_{L^p(w)}\coloneqq \|T_m:L^p(w)\to L^p(w)\|}\lesssim [w]_{A_p ^*} ^\gamma
\]
where $\gamma=\gamma(p,n,B)$ and the implicit constant is independent of $w$.
\end{proposition}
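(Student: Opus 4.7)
The plan is to follow the classical three-step Marcinkiewicz--Zygmund blueprint, adapted to the $A_p^\ast$-weighted setting via Fefferman--Stein type estimates for the \emph{strong} maximal function $M^\ast$. First, I would fix a smooth bump $\varphi$ on $\R$ supported in $\{1/2<|\xi|<2\}$ with $\sum_{k\in\Z}\varphi(2^{-k}|\xi|)=1$ off the origin, and for $\cic{k}\in\Z^n$ set $\varphi_{\cic{k}}(\xi)\coloneqq\prod_{i=1}^n\varphi(2^{-k_i}|\xi_i|)$. Denoting by $\Delta_{\cic{k}}$ the corresponding Fourier multiplier, this yields the product Littlewood--Paley decomposition $f=\sum_{\cic{k}\in\Z^n}\Delta_{\cic{k}}f$; I also introduce a slight enlargement $\widetilde\Delta_{\cic{k}}$ whose symbol is identically one on $\mathrm{supp}\,\varphi_{\cic{k}}$, so that $\Delta_{\cic{k}}T_m f = T_{m\varphi_{\cic{k}}}\widetilde\Delta_{\cic{k}}f$.

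The heart of the argument is the pointwise kernel estimate
\[
|\Delta_{\cic{k}}T_m f(x)|\lesssim M^\ast\widetilde\Delta_{\cic{k}} f(x), \qquad x\in\R^n.
\]
I would derive this by showing that the inverse Fourier transform $K_{\cic{k}}$ of $m\varphi_{\cic{k}}$ satisfies the uniform product bound $|K_{\cic{k}}(y)|\lesssim \prod_{i=1}^{n}2^{k_i}(1+2^{k_i}|y_i|)^{-2}$ on $\R^n$, since then $|K_{\cic{k}}\ast g|$ is dominated pointwise by $M^\ast g$. This is the step I expect to be the main technical obstacle, because it is precisely where the permutation-invariant mixed derivative hypothesis is consumed: for each non-empty $U\subseteq\{1,\ldots,n\}$ one integrates by parts twice in the coordinates belonging to $U$, and the rectangular hypothesis with $k=|U|$, applied to a permutation that lists $U$ first, delivers Schwartz-type decay in exactly those coordinates thanks to the tensor structure of $\varphi_{\cic{k}}$. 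Partitioning $\R^n$ into $2^n$ regions according to which $|y_i|$ are large versus small supplies the bookkeeping needed to treat all axes simultaneously.

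Given this pointwise control, the proof concludes by chaining the $A_p^\ast$-weighted product Littlewood--Paley equivalence
\[
\|g\|_{L^p(w)}\approx\bigg\|\Big(\sum_{\cic{k}}|\Delta_{\cic{k}}g|^2\Big)^{1/2}\bigg\|_{L^p(w)}
\]
with the $A_p^\ast$-weighted Fefferman--Stein vector-valued inequality
\[
\bigg\|\Big(\sum_{\cic{k}}|M^\ast g_{\cic{k}}|^2\Big)^{1/2}\bigg\|_{L^p(w)}\lesssim\bigg\|\Big(\sum_{\cic{k}}|g_{\cic{k}}|^2\Big)^{1/2}\bigg\|_{L^p(w)},
\]
both of which are obtained by iterating the corresponding one-parameter weighted results along coordinate axes and carry polynomial dependence on $[w]_{A_p^\ast}$. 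Combining the three ingredients produces
\[
\|T_m f\|_{L^p(w)}\lesssim\bigg\|\Big(\sum_{\cic{k}}|\Delta_{\cic{k}}T_m f|^2\Big)^{1/2}\bigg\|_{L^p(w)}\lesssim\bigg\|\Big(\sum_{\cic{k}}|M^\ast\widetilde\Delta_{\cic{k}} f|^2\Big)^{1/2}\bigg\|_{L^p(w)}\lesssim\|f\|_{L^p(w)},
\]
the exponent $\gamma=\gamma(p,n,B)$ being accumulated through the $n$-fold iteration of the sharp one-parameter bounds.
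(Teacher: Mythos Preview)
The paper does not prove this proposition; it is quoted from Kurtz and only the statement is recalled. So there is no ``paper's own proof'' to compare against, but your proposal can still be assessed on its merits.

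There is a genuine gap. The kernel bound you claim, $|K_{\cic{k}}(y)|\lesssim \prod_{i=1}^{n}2^{k_i}(1+2^{k_i}|y_i|)^{-2}$, cannot be obtained from the Marcinkiewicz hypothesis. That hypothesis controls only \emph{first}-order mixed partials $\partial_{\xi_{i_1}}\cdots\partial_{\xi_{i_k}}m$ (one derivative per coordinate) in $L^1$ over dyadic rectangles; it says nothing about $\partial_{\xi_i}^2 m$, so integrating by parts \emph{twice} in a coordinate is not permitted. Integrating by parts \emph{once} in each coordinate of $U$ and invoking the hypothesis gives at best
\[
|K_{\cic{k}}(y)|\lesssim B\prod_{i=1}^n \frac{2^{k_i}}{1+2^{k_i}|y_i|},
\]
and this kernel is \emph{not} integrable (each one-dimensional factor diverges logarithmically), so the pointwise domination $|\Delta_{\cic{k}}T_m f|\lesssim M^\ast \widetilde\Delta_{\cic{k}}f$ fails. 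The argument breaks precisely at the step you identified as the main technical obstacle.

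Kurtz's actual proof (and the classical unweighted one) does not go through a pointwise kernel estimate. Instead, on each dyadic rectangle one expresses $m$, via the fundamental theorem of calculus iterated across the $n$ coordinates, as a superposition---with total mass $\lesssim B$, by the hypothesis---of products of half-line indicators $\prod_{i\in U}\cic{1}_{(\eta_i,\infty)}(\xi_i)$. Thus $T_m\Delta_{\cic{k}}$ becomes an average of compositions of one-dimensional frequency projections (essentially coordinate Hilbert transforms), which are bounded on $L^p(w)$ for $w\in A_p^\ast$ with polynomial dependence on $[w]_{A_p^\ast}$. Combined with the weighted product Littlewood--Paley inequalities, which you correctly invoke, this yields the result. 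Your square-function framework is sound; it is the intermediate pointwise step that must be replaced.
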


With this result in hand we can now recall a weighted bound for the wedge multipliers $K_{U,\cic{\ell}}$ associated with a lacunary dissection of the sphere. The proof is a direct application of the theorem of Kurtz above to the operator
\[
f\mapsto \sum_{\cic{\ell}\in\Z^U}\eps_{\cic{\ell}}K_{U,\cic{\ell}}f,
\]
where $\{\eps_{\cic{\ell}}\}$ is an arbitrary choice of signs.
\begin{lemma}\label{lem:weightedPRwedge} Let $\Sigma$ be associated with a given ONB on ${\mathbb{S}^{n-1}}$ and denote by $A_p ^*$ the class of weights corresponding to its coordinate directions. Then for all $w\in A_p ^*$ we have
	\[
	\sup_{U\subseteq \Sigma}\Bigg\| \bigg(\sum_{\cic{\ell}\in\mathbb Z^U}\big|K_{U,\cic{\ell}}f\big |^2\bigg)^\frac12\Bigg\|_{L^p(w)}\lesssim [w]_{A_p ^*} ^\gamma \|f\|_{L^p(w)}
	\]
for some $\gamma=\gamma(p,n)$ and implicit constant independent of $f$ and $w$.
\end{lemma}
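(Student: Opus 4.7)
The plan is to reduce the square-function bound to a single $L^p(w)$ multiplier bound via Khintchine randomization, and then to apply the quoted theorem of Kurtz to the resulting signed-sum multiplier. First I would introduce i.i.d.\ Rademacher random variables $\eps_{\cic{\ell}} \in \{-1,+1\}$ indexed by $\cic{\ell} \in \Z^U$ and use that, pointwise in $x$, Khintchine's inequality gives
\[
\Big(\sum_{\cic{\ell}\in\Z^U} |K_{U,\cic{\ell}} f(x)|^2\Big)^{\frac12} \;\approx_p\; \Big(\mathbb{E}_{\eps}\Big| \sum_{\cic{\ell}\in\Z^U} \eps_{\cic{\ell}} K_{U,\cic{\ell}} f(x)\Big|^p\Big)^{\frac1p}.
\]
Integrating against $w\,\d x$ reduces the desired square-function estimate to a uniform bound on the operator $T_\eps$ with symbol
\[
m_\eps(\xi) \coloneqq \sum_{\cic{\ell}\in\Z^U} \eps_{\cic{\ell}} \prod_{\sigma \in U} \kappa_{\sigma,\ell_\sigma}(\xi),
\]
with constants depending only on $n$ and $p$ but not on the sign sequence.

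Next I would verify that $m_\eps$ meets the hypotheses of Kurtz's theorem with bounds independent of $\eps$. The uniform $L^\infty$ bound $\|m_\eps\|_\infty \lesssim 1$ is immediate from the bounded overlap in $\ell$ of the dilated wedges $\widetilde\Psi_{\sigma,\ell}$: for each $\xi$ and each $\sigma \in \Sigma$ only $O_n(1)$ indices $\ell_\sigma$ make $\kappa_{\sigma,\ell_\sigma}(\xi)$ nonzero, so the sum defining $m_\eps(\xi)$ is pointwise a sum of $O_n(1)$ products, each bounded by $1$. For the integral conditions on mixed partial derivatives over dyadic rectangles, the key structural fact is that each $\kappa_{\sigma,\ell}$ is a smooth homogeneous-of-degree-zero function of only the two coordinates $(\xi_{\sigma(1)}, \xi_{\sigma(2)})$, and the family $\{\kappa_{\sigma,\ell}\}_{\ell\in\Z}$, summed with any $\pm 1$ signs, is a standard Marcinkiewicz multiplier in those two coordinates at the lacunary angular scales $2^{-\ell}$. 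A Leibniz expansion of $\partial_{\xi_1}\cdots\partial_{\xi_k} m_\eps$ distributes the derivatives across the product over $\sigma \in U$; each coordinate $\xi_j$ acts nontrivially only on those factors with $j \in \sigma$, and at any point $\xi$ only $O_n(1)$ terms of the outer sum survive, which reduces the integral over a dyadic rectangle to a bounded sum of products of 2D Marcinkiewicz integrals, each controlled by $O(1)$.

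With Kurtz's hypotheses verified uniformly in $\eps$, an application of that theorem yields
\[
\|T_\eps f\|_{L^p(w)} \lesssim [w]_{A_p^*}^{\gamma} \|f\|_{L^p(w)}
\]
for some $\gamma=\gamma(p,n)$ and implicit constant independent of $\eps$ and $w$. Raising to the $p$-th power, integrating in $\eps$, and invoking the above Khintchine equivalence in reverse delivers the square-function bound in the statement. The supremum over $U\subseteq \Sigma(n)$ is automatic since $|\Sigma(n)|=\binom{n}{2}$ is a fixed finite number and every constant in the argument depends only on $n$ and $p$.

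The main technical obstacle I anticipate lies in step two, specifically in controlling the Kurtz integrals when two pairs $\sigma,\sigma'\in U$ share a coordinate (so that a single variable appears in multiple factors of the product) and the $\eps_{\cic{\ell}}$ do not factor across $\sigma$, precluding a clean decomposition of $m_\eps$ as a tensor product. The resolution is the pointwise bounded-overlap reduction noted above, which replaces the full outer sum by an $O_n(1)$-term sum at each $\xi$ and restores enough factorization to combine the 2D estimates via Leibniz's rule.
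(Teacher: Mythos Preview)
Your approach is exactly the one the paper takes: the paper states that the proof is a direct application of Kurtz's theorem to the randomized operator $f\mapsto \sum_{\cic{\ell}\in\Z^U}\eps_{\cic{\ell}}K_{U,\cic{\ell}}f$ for arbitrary signs $\{\eps_{\cic{\ell}}\}$, which is precisely the Khintchine reduction you carry out, followed by the Marcinkiewicz-type verification. Your additional observation that on any dyadic rectangle only $O_n(1)$ tuples $\cic{\ell}$ contribute (by the bounded angular overlap of the $\widetilde\Psi_{\sigma,\ell}$) is the correct way to handle the shared-coordinate issue you flag, and supplies the detail the paper omits.
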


In a similar spirit and with an identical proof one can easily provide weighted norm inequalities for the conical multipliers $W_v$ associated with a fixed direction $v\in \R^n$. See also \eqref{eq:derivomega} in \S\ref{sec:proof} below for a similar calculation.

\begin{lemma}\label{lem:l2wNSW}For $v\in {\mathbb{S}^{n-1}}$ let $W_v$ be defined as in \eqref{eq:NSWcones}. Then for all $p\in (1,\infty)$ and all $w\in A_p ^*$ we have
	\[
	\sup_{v\in {\mathbb{S}^{n-1}}}\|W_v\|_{L^p(w)}\lesssim [w]_{A_p ^*} ^\gamma
	\]
for some $\gamma=\gamma(n,p)$ and implicit constant independent of $w$.
\end{lemma}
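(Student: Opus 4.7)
The plan is to derive Lemma \ref{lem:l2wNSW} as a direct application of Kurtz's weighted Marcinkiewicz multiplier theorem recalled above, after verifying that the symbol of $W_v$ satisfies the hypotheses of that theorem uniformly in $v\in {\mathbb{S}^{n-1}}$. Since $W_v$ has symbol $m_v(\xi)\coloneqq \omega(v_1\xi_1,\dots,v_n\xi_n)$, and $\omega$ is smooth and homogeneous of degree $0$ on $\R^n\setminus\{0\}$, the chain rule and $0$-homogeneity yield
\[
\partial^{\alpha}_\xi m_v(\xi)=\Big(\textstyle\prod_i v_i^{\alpha_i}\Big)(\partial^\alpha \omega)(v_1\xi_1,\dots,v_n\xi_n),\qquad |\partial^\alpha\omega(\eta)|\lesssim_\alpha |\eta|^{-|\alpha|}.
\]
The elementary inequality $\prod_i |v_i\xi_i|^{\alpha_i}\leq \|(v_1\xi_1,\dots,v_n\xi_n)\|^{|\alpha|}$ then upgrades this to the tensor-product H\"ormander-Mikhlin bound
\begin{equation}\label{eq:derivomegaplan}
|\xi_1|^{\alpha_1}\cdots|\xi_n|^{\alpha_n}\,|\partial^\alpha m_v(\xi)|\lesssim_\alpha 1,\qquad \xi\in \R^n\setminus\bigcup_{i=1}^n e_i^\perp,
\end{equation}
with implicit constant independent of $v\in {\mathbb{S}^{n-1}}$.

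The next step is the routine passage from the pointwise estimate \eqref{eq:derivomegaplan} to the Marcinkiewicz integral condition for dyadic rectangles. For a dyadic rectangle $\rho=\prod_{i=1}^k [\pm 2^{j_i},\pm 2^{j_i+1}]\subset \R^k$ and a choice of the remaining frozen coordinates $\xi_{k+1},\dots,\xi_n$, one integrates the bound \eqref{eq:derivomegaplan} with $\alpha=(1,\dots,1,0,\dots,0)$, using $\int_{2^{j_i}}^{2^{j_i+1}}|\xi_i|^{-1}\d \xi_i\lesssim 1$, to obtain
\[
\sup_{\xi_{k+1},\dots,\xi_n}\int_\rho \Big|\frac{\partial^k m_v(\xi)}{\partial \xi_1 \cdots \partial\xi_k}\Big|\, \d\xi_1\cdots\d\xi_k\lesssim 1,
\]
with the same uniformity in $v$ and in every permutation of the coordinates.

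Finally, applying the proposition of Kurtz to $m_v$ yields the desired weighted bound
\[
\|W_v\|_{L^p(w)}\lesssim [w]_{A_p^*}^{\gamma(p,n)}
\]
uniformly in $v\in {\mathbb{S}^{n-1}}$, which is the statement of the lemma. There is no real obstacle here: once \eqref{eq:derivomegaplan} is recorded, this lemma is strictly parallel to the proof of Lemma \ref{lem:weightedPRwedge}, the only point worth noting being that the uniformity in $v$ comes for free from the homogeneity of $\omega$ combined with the factorization $\prod v_i^{\alpha_i}$ appearing in the chain rule.
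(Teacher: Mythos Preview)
Your proof is correct and follows exactly the approach the paper indicates: the paper does not spell out a proof but states that it is ``identical'' to that of Lemma~\ref{lem:weightedPRwedge} via a direct application of Kurtz's theorem, pointing to the derivative estimate \eqref{eq:derivomega} as the relevant calculation---which is precisely the bound you derive in your display \eqref{eq:derivomegaplan}. Your write-up makes the implicit steps explicit, including the passage from the pointwise tensor H\"ormander--Mikhlin bound to the Marcinkiewicz dyadic-rectangle condition, and correctly identifies the source of the uniformity in $v$.
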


The previous results imply weighted norm inequalities for the maximal function $\M_\Omega$ along directions of a lacunary set $\Omega\subseteq {\mathbb{S}^{n-1}}$
{\[
\M_\Omega f(x) \coloneqq \sup_{v\in \Omega}\sup_{s>0} \frac{1}{2s}\int_{-s} ^s |f(x+tv)| \, \d t,\qquad x\in\R^n.
\]
}

 The proof of these weighted norm inequalities can be found in \cite{DPPimrn}, however said proof is an adaptation of the corresponding Lebesgue measure argument from \cite{PR2}.

\begin{proposition}Let $\Omega\subset {\mathbb{S}^{n-1}}$ be a set of directions which is lacunary of order $L$, where $L$ is a positive integer, and let $w\in A_p ^\Omega$ be a directional weight with respect to $\Omega$. For all $p\in(1,\infty)$ there exists a constant $\gamma=\gamma(p,n)>0$ such that
	\[
	\|\M_\Omega\|_{L^p(w)}\lesssim [w]_{A_p ^\Omega} ^{\gamma L},
	\]
with implicit constant depending only on $p$, $n$ and the lacunarity order of $\Omega$.
\end{proposition}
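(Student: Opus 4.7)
My plan is induction on the order of lacunarity $L$, adapting the unweighted Parcet--Rogers strategy while tracking weighted constants through each inductive stage; the weighted setting is amenable to this because Rubio de Francia extrapolation converts inductive $L^p(w)$-bounds into the vector-valued $L^p(\ell^\infty)$ estimates needed at the next level.

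\emph{Base case} $L=0$. Here $\Omega=\{v_0\}$ and $\M_\Omega$ is the one-dimensional Hardy--Littlewood maximal function along lines parallel to $v_0$. Since $w\in A_p^\Omega$ says exactly that $w|_{\ell}\in A_p(\R)$ uniformly over lines $\ell\parallel v_0$ with characteristic bounded by $[w]_{A_p^\Omega}$, Buckley's sharp one-dimensional weighted bound applied fiberwise, followed by Fubini in $v_0^\perp$, yields $\|\M_\Omega\|_{L^p(w)}\lesssim [w]_{A_p^\Omega}^{1/(p-1)}$, so $\gamma(0)=1/(p-1)$ works.

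\emph{Inductive step} from $L-1$ to $L$. Let $\Omega$ be lacunary of order $L$ with dissection $\{\Omega_{\sigma,\ell}\}$, so that each $\Omega_{\sigma,\ell}$ is lacunary of order $L-1$. Fix an arbitrary $\sigma_0\in\Sigma$; since the sectors $\{\Omega_{\sigma_0,\ell}\}_{\ell\in\Z}$ partition $\Omega$ we have the pointwise identity
\[
\M_\Omega f(x)=\sup_{\ell\in\Z}\M_{\Omega_{\sigma_0,\ell}}f(x).
\]
The inductive hypothesis, together with the trivial bound $[w]_{A_p^{\Omega_{\sigma_0,\ell}}}\le[w]_{A_p^\Omega}$, gives $\|\M_{\Omega_{\sigma_0,\ell}}\|_{L^p(w)}\lesssim [w]_{A_p^\Omega}^{\gamma(L-1)}$ uniformly in $\ell$, and Rubio de Francia extrapolation lifts this to the corresponding $L^p(w;\ell^\infty(\Z))$-bound. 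The supremum over $\ell$ is then absorbed via the Parcet--Rogers two-dimensional covering: for $v\in\Omega_{\sigma_0,\ell}$ the segment $I(x,t,v)$ is enclosed in an axis-aligned rectangle of eccentricity $\asymp 2^{-\ell}$ in the $(e_{\sigma_0(1)},e_{\sigma_0(2)})$-plane, and this enclosure lets one dominate $\sup_\ell\M_{\Omega_{\sigma_0,\ell}}f$ pointwise by the two-parameter strong maximal function in the $\sigma_0$-plane applied to a fiberwise majorant of the family $(\M_{\Omega_{\sigma_0,\ell}}f)_\ell$. That strong maximal function has $L^p(w)$-norm at most $[w]_{A_p^*}^{O(1)}\le[w]_{A_p^\Omega}^{O(1)}$ by Buckley--C\'ordoba, using that the coordinate axes may be assumed to lie in $\Omega$ as permitted by the paragraph preceding Definition~\ref{def:lac}. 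The net effect is $\gamma(L)=\gamma(L-1)+O_{n,p}(1)$, hence $\gamma(L)=O_{n,p}(L)$.

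The main obstacle is the covering step: trading the $\ell$-supremum against only a coordinate-direction strong maximal function, with no additional $L$-dependent multiplicative cost on the weighted constant, is the geometric heart of the Parcet--Rogers approach. Translating it to the weighted setting forces one to invoke the vector-valued estimate supplied by extrapolation in order to keep the inductive budget linear in $L$; once this quantitative covering is in place, iterating the induction yields the claimed linear-in-$L$ exponent.
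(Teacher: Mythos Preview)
Your inductive skeleton is the right one, but the passage from $L-1$ to $L$ has two genuine gaps.

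First, the appeal to Rubio de Francia extrapolation for an $L^p(w;\ell^\infty(\Z))$ bound is circular. Extrapolation from uniform scalar bounds on $\M_{\Omega_{\sigma_0,\ell}}$ delivers $\ell^q$-valued inequalities for $q\in(1,\infty)$, not $q=\infty$; and taking $f_\ell=f$ in the $\ell^\infty$ estimate \emph{is} the bound on $\M_\Omega f=\sup_\ell \M_{\Omega_{\sigma_0,\ell}}f$ you are trying to prove. So extrapolation cannot supply the step you are asking it to perform.

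Second, the spatial covering you describe does not hold. In dimensions $n\ge 3$ the segment $I(x,t,v)$ for $v\in\Omega_{\sigma_0,\ell}$ has nontrivial extent in all $n$ coordinate directions and is not contained in any rectangle of the two-dimensional $(e_{\sigma_0(1)},e_{\sigma_0(2)})$-plane; no pointwise domination of $\sup_\ell \M_{\Omega_{\sigma_0,\ell}}f$ by a two-parameter strong maximal function in a single coordinate plane is available, and the unspecified ``fiberwise majorant'' would have to be $\M_\Omega f$ itself. The Parcet--Rogers mechanism is not a spatial covering of segments by planar rectangles: it is a \emph{frequency-side} covering by the wedges $\Psi_{\sigma,\ell}$ of \S\ref{s2}, and it uses \emph{all} pairs $\sigma\in\Sigma$ simultaneously, not a single $\sigma_0$. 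The weighted recursion, as carried out in \cite{DPPimrn} and whose structure is visible in this paper in the proof of Lemma~\ref{lem:l2wNSWmax}, localizes $v$ to a cell $\Omega_{\cic{\ell}}$, inserts the wedge multipliers $K_{U,\cic{\ell}}$ via an inclusion--exclusion identity as in Lemma~\ref{lem:cover}, trades the supremum over $\cic{\ell}$ for an $\ell^2$-sum, and closes the induction through the weighted square function bound of Lemma~\ref{lem:weightedPRwedge}. That is the place where one power of $[w]_{A_p^*}$ enters per stage and yields $\gamma(L)=\gamma(L-1)+O_{n,p}(1)$; a single coordinate pair $\sigma_0$ and a spatial covering cannot substitute for it.
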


The boundedness of the directional maximal function $\M_\Omega$ now allows us to extrapolate weighted norm inequalities from $L^2(w)$ as in \cite{DPPimrn}*{\S4.2}. Namely the following holds.

\begin{proposition}\label{prop:extrap} Let $\Omega\subseteq {\mathbb{S}^{n-1}}$ be a (closed) lacunary set of directions of finite order. Suppose that there exists a $p_0\in(1,\infty)$ and $\gamma>0$ such that for some family of pairs of non-negative function $(f,g)$ we have
	\[
	\|f\|_{L^{p_0}(w)}\lesssim [w]_{A_{p_0} ^\Omega} ^\gamma \|g\|_{L^{p_0}(w)}
	\]
with implicit constant independent of $(f,g)$ and $w$. Then for all $p\in (1,\infty)$ and all $w\in A_p ^\Omega$ we have
\[
\|f\|_{L^p(w)} \lesssim [w]_{A_p ^\Omega} ^{\gamma_p} \|g\|_{L^p(w)}
\]
where $\gamma_p$ depends on $\gamma,n,p$ and the order of lacunarity of $\Omega$; the implicit constant depends only on $p,n$ and the lacunarity order of $\Omega$.
\end{proposition}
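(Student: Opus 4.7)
The plan is to adapt Rubio de Francia's extrapolation theorem to the directional class $A_p^\Omega$. The essential input is the preceding proposition, which gives the quantitative bound $\|\M_\Omega\|_{L^r(w)}\lesssim [w]_{A_r^\Omega}^{\gamma L}$ for all $r\in(1,\infty)$ and $w\in A_r^\Omega$. From this one obtains in the standard way a directional Rubio de Francia iteration: for such $r$ and $w$, and any nonnegative $h\in L^r(w)$, the series
\[
R_{r,w} h\coloneqq \sum_{k\ge 0}\frac{\M_\Omega^k h}{(2\|\M_\Omega\|_{L^r(w)})^k}
\]
converges pointwise a.e.\ to a function $H$ satisfying $h\le H$ pointwise, $\|H\|_{L^r(w)}\le 2\|h\|_{L^r(w)}$, and $[H]_{A_1^\Omega}\le 2\|\M_\Omega\|_{L^r(w)}\lesssim [w]_{A_r^\Omega}^{\gamma L}$. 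The last property follows because $\M_\Omega (Rh)\le 2\|\M_\Omega\|_{L^r(w)} Rh$ by construction, and the $A_1^\Omega$ constant of a function dominating its own $\M_\Omega$-average pointwise is bounded by that pointwise constant.

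I would then split into the two cases $p>p_0$ and $p<p_0$. For $p>p_0$, write $\|f\|_{L^p(w)}^{p_0}=\|f^{p_0}\|_{L^{p/p_0}(w)}$ and dualize against nonnegative $h$ with $\|h\|_{L^{(p/p_0)'}(w)}=1$. Replace $h$ by its majorant $H=R_{r,w}h$ at level $r=(p/p_0)'$; note that $w\in A_p^\Omega\subset A_r^\Omega$ for this $r$ (after a routine check based on $p>p_0$), so the iteration is licit. Set $W\coloneqq Hw$. Then $W\in A_{p_0}^\Omega$ with $[W]_{A_{p_0}^\Omega}\lesssim [H]_{A_1^\Omega}^{p_0-1}[w]_{A_p^\Omega}^{\kappa}$ for some $\kappa=\kappa(p,p_0)$; this is a fiberwise consequence of the one-dimensional factorization $A_{p_0}=A_1\cdot A_1^{1-p_0}$ applied on each line through a direction $v\in\Omega$, so that all directional averages in the definition of $[W]_{A_{p_0}^\Omega}$ are controlled uniformly in $v$. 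Applying the hypothesis with weight $W$,
\[
\int f^{p_0} h\, w \le \int f^{p_0} W \lesssim [W]_{A_{p_0}^\Omega}^{\gamma}\int g^{p_0} W,
\]
and controlling the right-hand side by H\"older (using $\|H\|_{L^{(p/p_0)'}(w)}\le 2$) closes the case, yielding $\|f\|_{L^p(w)}\lesssim [w]_{A_p^\Omega}^{\gamma_p}\|g\|_{L^p(w)}$.

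The case $p<p_0$ is treated symmetrically: dualize $\|f\|_{L^p(w)}$ using the conjugate weight $\sigma\coloneqq w^{1-p'}$, which lies in $A_{p'}^\Omega$ with $[\sigma]_{A_{p'}^\Omega}=[w]_{A_p^\Omega}^{p'-1}$, and perform the Rubio de Francia iteration in $L^{s}(\sigma)$ for the dual exponent $s$ arising from the duality pairing between $L^{p_0}$ and $L^{p_0'}$. The same recipe then produces a weight $W'\in A_{p_0}^\Omega$ with controlled constant, and the hypothesis together with the symmetric H\"older manipulation produces again $\|f\|_{L^p(w)}\lesssim [w]_{A_p^\Omega}^{\gamma_p}\|g\|_{L^p(w)}$. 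The only nontrivial bookkeeping is tracking the exponent $\gamma_p$ through the iteration and the factorization, which depends explicitly and in a standard way on $\gamma$, $p$, $p_0$, and the order of lacunarity of $\Omega$ through the power $\gamma L$ appearing in the quantitative bound on $\|\M_\Omega\|_{L^r(w)}$. No new ideas beyond the preceding proposition are required.
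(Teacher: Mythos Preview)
Your overall strategy is exactly what the paper intends: it does not give a proof and simply refers to \cite{DPPimrn}*{\S4.2}, where the Rubio de Francia extrapolation is carried out in the directional $A_p^\Omega$ classes using the quantitative bound on $\M_\Omega$. So the plan is right, and the case $p<p_0$ can indeed be run essentially as you describe.

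There is, however, a genuine gap in your treatment of the case $p>p_0$. Two claims there fail. First, the inclusion $w\in A_p^\Omega\subset A_r^\Omega$ with $r=(p/p_0)'=p/(p-p_0)$ holds only when $r\ge p$, i.e.\ when $p\le p_0+1$; for $p>p_0+1$ it is false, so you cannot iterate $\M_\Omega$ on $L^r(w)$ as written. Second, and more seriously, even when the iteration is licit and yields $H\in A_1^\Omega$, the assertion that $W=Hw\in A_{p_0}^\Omega$ does \emph{not} follow from the factorization $A_{p_0}=A_1\cdot A_1^{1-p_0}$: that would require $w^{-1/(p_0-1)}\in A_1^\Omega$, which is strictly stronger than $w\in A_p^\Omega$ for $p>p_0$. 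A one-line counterexample on a single fiber: take $H\equiv 1$ and $w(t)=|t|^{\alpha}$ with $p_0-1<\alpha<p-1$; then $w\in A_p\setminus A_{p_0}$ and $Hw=w\notin A_{p_0}$.

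The standard fix is to iterate not $\M_\Omega$ but the dual operator $S_w\phi\coloneqq \M_\Omega(\phi w)/w$ on $L^{p'}(w)$; this is bounded because $w^{1-p'}\in A_{p'}^\Omega$, with norm controlled by a power of $[w]_{A_p^\Omega}$. One then sets $H\coloneqq \big(R'_w(h^{1/\alpha})\big)^{\alpha}$ with $\alpha=(p-p_0)/(p-1)\in(0,1)$, where $R'_w$ is the Rubio de Francia iteration of $S_w$ on $L^{p'}(w)$. This $H$ satisfies $h\le H$, $\|H\|_{L^{(p/p_0)'}(w)}\le 2^\alpha$, and, writing $v\coloneqq R'_w(h^{1/\alpha})\,w\in A_1^\Omega$, one has $Hw=v^{\alpha}w^{1-\alpha}$; a direct H\"older computation along each fiber gives
\[
[Hw]_{A_{p_0}^\Omega}\le [v]_{A_1^\Omega}^{\alpha}\,[w]_{A_p^\Omega}^{1-\alpha},
\]
which is the controlled $A_{p_0}^\Omega$ bound you need. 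With this correction the rest of your argument goes through and the exponent bookkeeping is as you indicate.
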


\subsection{A maximal inequality for Nagel-Stein-Wainger cones} In the proof of our main theorem we will need a maximal version of Lemma~\ref{lem:l2wNSW}. For this let us consider a set $\Omega\subset {\mathbb{S}^{n-1}}$ and define the maximal cone multiplier operator
\[
W_{\Omega} f(x) \coloneqq \sup_{v\in\Omega}|W_v f(x)|,\qquad x\in \R^n.
\]
\begin{lemma}\label{lem:l2wNSWmax} Let $\Omega\subset {\mathbb{S}^{n-1}}$ be a lacunary set and $w\in A_p ^\Omega$. Then
	\[
	\|W_{\Omega}\|_{L^p(w)} \lesssim [w]_{A_p ^\Omega} ^\gamma 
	\]
for some $\gamma$ depending on $p,n$, and the lacunarity order of $\Omega$.
\end{lemma}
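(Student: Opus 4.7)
\medskip

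\noindent\textbf{Proof plan.} The plan is to induct on the lacunarity order $L$ of $\Omega$. For the base case $L=0$ the set $\Omega$ is a single direction, so by the remark preceding Lemma~\ref{lem:l2wNSW} (after enlarging $\Omega$ by the coordinate basis, which only multiplies the lacunarity constant by an absolute factor) one has $A_{p}^{\Omega}\subset A_{p}^{*}$, and Lemma~\ref{lem:l2wNSW} already yields the claim. For the inductive step, suppose the assertion holds for all lacunary sets of order at most $L-1$, and fix a dissection $\{\Omega_{\sigma,\ell}\}$ of $\Omega$ witnessing order $L$. For each $v\in\Omega$ there is a unique $\cic{\ell}(v)\in \Z^{\Sigma}$ with $v\in\Omega_{\cic{\ell}(v)}$, and Lemma~\ref{lem:cover} gives
\[
W_{v}f=\sum_{\varnothing\neq U\subseteq\Sigma}(-1)^{|U|+1}W_{v}K_{U,\cic{\ell}(v)}f,
\]
where, crucially, the operator $K_{U,\cic{\ell}(v)}$ only depends on the restriction $\cic{\ell}(v)|_{U}$. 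Since there are $O_{n}(1)$ subsets $U$, it suffices to estimate the contribution of each fixed $U$.

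For this fixed $U$, organize the supremum over $v\in\Omega$ by grouping directions with the same cell index $\cic{\ell}_{U}\in\Z^{U}$: setting $\Omega_{U,\cic{\ell}_U}\coloneqq \bigcap_{\sigma\in U}\Omega_{\sigma,\ell_{\sigma}}$, every $\Omega_{U,\cic{\ell}_U}$ is a subset of some order $L-1$ lacunary sector and is therefore itself lacunary of order at most $L-1$. Since $K_{U,\cic{\ell}(v)}$ agrees with $K_{U,\cic{\ell}_U}$ for every $v\in \Omega_{U,\cic{\ell}_U}$, one gets the pointwise control
\[
\sup_{v\in\Omega}\big|W_{v}K_{U,\cic{\ell}(v)}f(x)\big|= \sup_{\cic{\ell}_U\in\Z^U}\big|W_{\Omega_{U,\cic{\ell}_U}}(K_{U,\cic{\ell}_U}f)(x)\big|\leq \bigg(\sum_{\cic{\ell}_U\in\Z^U}\big|W_{\Omega_{U,\cic{\ell}_U}}(K_{U,\cic{\ell}_U}f)(x)\big|^{2}\bigg)^{1/2}.
\]
The point of the dissection is that each inner operator $W_{\Omega_{U,\cic{\ell}_U}}$ falls under the inductive hypothesis, with the uniform bound $[w]_{A_{p}^{\Omega_{U,\cic{\ell}_U}}}\leq [w]_{A_{p}^{\Omega}}$.

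To exploit this I would feed the scalar $L^{p}(w)$ bound from the induction hypothesis into Proposition~\ref{prop:extrap} and the standard Rubio de Francia vector-valued extrapolation machinery (as already used in \cite{DPPimrn}*{\S4.2}) to obtain an $\ell^{2}$-valued inequality
\[
\bigg\|\bigg(\sum_{\cic{\ell}_U}|W_{\Omega_{U,\cic{\ell}_U}}g_{\cic{\ell}_U}|^{2}\bigg)^{1/2}\bigg\|_{L^{p}(w)}\lesssim [w]_{A_{p}^{\Omega}}^{\gamma_{L-1}'}\bigg\|\bigg(\sum_{\cic{\ell}_U}|g_{\cic{\ell}_U}|^{2}\bigg)^{1/2}\bigg\|_{L^{p}(w)},
\]
and then specialize $g_{\cic{\ell}_U}=K_{U,\cic{\ell}_U}f$. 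Applying the weighted wedge square function bound of Lemma~\ref{lem:weightedPRwedge} finishes the estimate for this $U$ with a loss of a further power of $[w]_{A_{p}^{\Omega}}$. Summing over $U\subseteq\Sigma$ yields the desired estimate with exponent $\gamma_{L}=\gamma_{L-1}'+O(1)$, which closes the induction.

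The main obstacle is setting up the vector-valued weighted bound for $W_{\Omega_{U,\cic{\ell}_U}}$ with a power of $[w]_{A_{p}^{\Omega}}$ independent of $\cic{\ell}_U$: this needs both the quantitative scalar bound from the induction hypothesis and the monotonicity $A_{p}^{\Omega}\subseteq A_{p}^{\Omega_{U,\cic{\ell}_U}}$, together with a Rubio de Francia style argument. Once this is in place, the geometric decomposition via Lemma~\ref{lem:cover} and the wedge square function of Lemma~\ref{lem:weightedPRwedge} are straightforward.
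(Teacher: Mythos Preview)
Your argument is correct and follows the same skeleton as the paper's proof: induct on the lacunarity order, use the inclusion--exclusion formula of Lemma~\ref{lem:cover} to split over $\varnothing\neq U\subseteq\Sigma$, dominate the supremum over $v$ by an $\ell^2$-sum over $\cic{\ell}_U\in\Z^U$, apply the inductive bound on each $W_{\Omega_{U,\cic{\ell}_U}}$, and close with the wedge square function of Lemma~\ref{lem:weightedPRwedge}.

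The one difference worth noting is the order in which extrapolation enters. The paper invokes Proposition~\ref{prop:extrap} \emph{first}, reducing to $p=2$, and then runs the entire recursion in $L^2(w)$. At $p=2$ the passage from the scalar bound on $W_{\Omega_{U,\cic{\ell}_U}}$ to the $\ell^2$-valued bound is trivial by Fubini, since $L^2(w;\ell^2)=\ell^2(L^2(w))$; no vector-valued extrapolation is needed. Your route works directly at $L^p(w)$ and therefore must appeal to Rubio de Francia's extrapolation for families of pairs to obtain the $\ell^2$-valued inequality for the (varying) operators $W_{\Omega_{U,\cic{\ell}_U}}$. This is legitimate, but the paper's ``extrapolate first, recurse at $p=2$'' ordering is a bit cleaner and avoids that extra layer.
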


\begin{proof} By the extrapolation result of Proposition~\ref{prop:extrap} it will be enough to proof the $L^2(w)$-version of the conclusion whenever $w\in A_2 ^\Omega$. We will do so by proving the recursive formula
	\[
	\|W_\Omega f\|_{L^2(w)}\leq B [w]_{A_2 ^\Omega} ^\gamma \sup_{\sigma\in \Sigma} \sup_{\ell\in \Z} \|W_{\Omega_{\sigma,\ell}}\|_{L^2(w)}
	\]
with $\gamma$ as in the conclusion of the lemma and $B>0$ a numerical constant depending only upon dimension. The proof then follows by an inductive application of the formula above, repeated as many times as the order of lacunarity $L$ of $\Omega$. The base step of the induction corresponds to lacunary sets of order $0$ in which case the desired estimate is the content of Lemma~\ref{lem:l2wNSW}.

To prove the recursive formula let $v\in \Omega$ so that $v\in \Omega_{\cic{\ell}}$ for some unique $\cic{\ell}\in \Z^\Sigma$. By Lemma~\ref{lem:cover} we have that
{
\[
|W_v f(x)|=\Big| \sum_{\varnothing\neq U\subseteq \Sigma} (-1)^{|U|+1}   W_{\Omega_{\cic{\ell}}} K_{U,\cic{\ell}}f \Big| \lesssim \sup_{\varnothing\neq U\subseteq \Sigma} \sup_{\cic{\ell}\in\Z^\Sigma}|W_{\Omega_{\cic{\ell}}} K_{U,\cic{\ell}}f|
\]
and so 
\[
\|W_{\Omega}f\|_{L^2(w)}\lesssim \big \| \sup_{\varnothing\neq U\subseteq \Sigma} \sup_{\cic{\ell}\in\Z^\Sigma}|W_{\Omega_{\cic{\ell}}} K_{U,\cic{\ell}}f|\big \|_{L^2(w)}\lesssim  \sup_{\varnothing\neq U\subseteq \Sigma} \big \| \sup_{\cic{\ell}\in\Z^\Sigma}|W_{\Omega_{\cic{\ell}}} K_{U,\cic{\ell}}f|\big \|_{L^2(w)}.
\]
The implicit constants in the estimates above depend only on the dimension. Now given $\varnothing\neq U\subseteq \Sigma$ and $\cic{\ell}\in\Z^{\Sigma}$ we write, as in \cite{PR2}*{p. 1545, (6)} $\Z^{\Sigma}=\Z^U\times \Z^{\Sigma\setminus U}$ and $\cic{\ell}\eqqcolon \cic{j}\times \cic{k}$ with $\cic{j}\in\Z^U$ and $\cic{k}\in\Z^{\Sigma\setminus U}$. With this notation in hand we can now estimate for  $\varnothing\neq U\subseteq \Sigma$ and any sequence $\{ f_{\cic{j}}\}_{\cic{j}\in\Z^U}$
\[
\begin{split}
\Big \| \sup_{\cic{j}\times\cic{k}=\cic{\ell}\in\Z^\Sigma}|W_{\Omega_{\cic{\ell}}} f_{\cic{j}}|\Big \|_{L^2(w)}&\leq \bigg(\sum_{\cic{j}\in \Z^U} \Big \| \sup_{\cic{k}\in\Z^{\Sigma\setminus U}}|W_{\Omega_{\cic{\ell}}} f_{\cic {j}}|\Big \|_{L^2(w)} ^2 \bigg)^{\frac12}
\\
&  \leq  \sup_{\cic{j}\in \Z^U}\Big\| \sup_{\cic{k}\in\Z^{\Sigma\setminus U}} W_{\Omega_{\cic{\ell}}}\Big\|_{L^2(w)} \bigg(\sum_{\cic{j}\in\Z^U}\|f_{\cic{j}}\|_{L^2(w)} ^2 \bigg)^{\frac{1}{2}}
\\
& \leq \sup_{\sigma\in\Sigma}\sup_{\ell \in\Z} \|W_{\Omega_{\sigma,\ell}}\|_{L^2(w)} \bigg(\sum_{\cic{j}\in\Z^U}\|f_{\cic{j}}\|_{L^2(w)} ^2 \bigg)^{\frac{1}{2}}.
\end{split}
\]
For $\varnothing \neq U\subset \Sigma$ fixed and $\cic{j}\in\Z^U=\{j_\sigma\}_{\sigma\in U}$ as above we let 
\[
f_{\cic{j}}\coloneqq \prod_{\sigma\in U}K_{\sigma,j_\sigma}f=\prod_{\sigma\in U}K_{\sigma,\ell_\sigma}f=K_{U,\cic{\ell}}f
\]
by the definition of $\cic{j}\in\Z^U$. Using the weighted vector-valued inequality of Lemma~\ref{lem:weightedPRwedge} and the estimates above we get
\[
\|W_{\Omega}f\|_{L^2(w)}\lesssim_n [w]_{A_2 ^*} ^\gamma \sup_{\sigma\in\Sigma}\sup_{\ell \in\Z} \|W_{\Omega_{\sigma,\ell}}\|_{L^2(w)} \|f\|_{L^2(w)}
\]
which is the desired estimate.
}
%
%
%
\end{proof}

\subsection{The Chang-Wilson-Wolff reduction} The proof of our main result relies upon suitable frequency decompositions of the maximal multiplier in hand, with directions in a lacunary set. The main splitting of the operator gives an inner part, including the singular sets of the symbols $m(\xi \cdot v)$ for all $v\in \Omega$, and an outer part which is only singular at the origin. Due to the presence of the supremum in the directions, we cannot however directly use Littlewood-Paley theory to analyze these objects. A familiar tool that has been successfully used in several occasions in the theory of directional singular integrals is a consequence of the Chang-Wilson-Wolff inequality, \cite{cww}. This allows us to commute the supremum over $N$ multipliers with a suitable Littlewood-Paley projection at a $\sqrt{\log N}$-loss.

As we are proving $L^2(w)$-results with the plan to extrapolate to $L^p(w)$, we need a weighted version of the Chang-Wilson-Wolff reduction which we formulate below. For the details of the proof see for example \cite{DPPimrn}*{Proposition 5.2} or \cite{Dem} and the references therein. In order to state this result we introduce a coordinate-wise Littlewood-Paley decomposition in the usual fashion. 

Letting $p$ be a smooth function on $\R$ such that
\[
\sum_{t\in\Z} p(2^{-t}\xi)=1,\qquad \xi \neq 0,
\]
and such that $p$ vanishes off the set $\{\xi\in\R:\, \frac12<|\xi|<2\}$, we define
\[
(P_t ^j f)^\wedge (\xi) \coloneqq p(2^{-t} \xi_j){\hat f(\xi)},\qquad \xi=(\xi_1,\ldots,\xi_n)\in\R^n,\quad \quad t\in\Z.
\]

\begin{proposition}\label{prop:weightedcww} Let  $\{R_1,\ldots,R_N\}$ be Fourier multiplier operators on $\R^n$ satisfying uniform $L^2(w)$-bounds
\[
\sup_{1\leq \tau\leq N}\|R_\tau\|_{L^2(w)} \leq  [w]_{A_2^*}^\gamma
\]
for some $\gamma>0$. Let $\{P_t ^j\}_{t\in\mathbb Z}$ be a smooth Littlewood-Paley decomposition acting on the $j$-th frequency variable, where $1\leq j\leq n$. For $w\in A_p ^*$ and $1<p<\infty$ we then have
	\[
	\Big\|\sup_{1\leq \tau \leq N} |R_\tau f|\Big\|_{L^p(w)}\lesssim  [w]_{A_p^*}^{\gamma_p} 
	\bigg( \|f\|_{L^p(w)} +   \sqrt{\log (N+1)} \, \Big\| \big(\sum_{t\in\mathbb Z}  \sup_{1\leq \tau \leq N} |R_\tau P_t^j f|^2 \big)^\frac{1}{2} \Big\|_{L^p(w)}\bigg)
	\]
for some exponent $\gamma_p=\gamma_p(\gamma,p,n)$ and implicit constant independent of $w,f,N$. 
\end{proposition}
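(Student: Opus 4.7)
The plan is a two-step reduction: from general $p$ to the case $p=2$ via extrapolation, followed by application of a one-dimensional weighted Chang-Wilson-Wolff inequality in the variable $x_j$.

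For the extrapolation, observe that the standard basis $\{e_1,\ldots,e_n\}$ is lacunary of order zero, being a finite union of singletons, so that $A_p^* = A_p^\Omega$ with $\Omega = \{e_1,\ldots,e_n\}$. Applying Proposition~\ref{prop:extrap} to the family of pairs
\[
\Big(\sup_\tau|R_\tau F|,\ F + \sqrt{\log(N+1)}\big(\textstyle\sum_t \sup_\tau|R_\tau P_t^j F|^2\big)^{1/2}\Big)
\]
indexed by $F$, it suffices to establish the $L^2(w)$ version of the inequality for $w\in A_2^*$, at the cost of updating the exponent $\gamma$ to some $\gamma_p=\gamma_p(\gamma,p,n)$.

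For the $L^2(w)$ bound, freezing $\widehat{x_j}=(x_1,\ldots,\widehat{x_j},\ldots,x_n)$ yields a uniformly $A_2(\R)$-weight $x_j\mapsto w(x)$ with constant bounded by $[w]_{A_2^*}$. The classical weighted Chang-Wilson-Wolff inequality from \cite{cww}, applied to the $N$-tuple $\{R_\tau f(\cdot,\widehat{x_j})\}_{\tau=1}^N$ of functions on $\R$ with the Littlewood-Paley decomposition $\{P_t^j\}$ acting in the $x_j$-variable, followed by Fubini integration in $\widehat{x_j}$, yields an inequality of the shape
\[
\Big\|\sup_\tau|R_\tau f|\Big\|_{L^2(w)} \lesssim [w]_{A_2^*}^{\gamma_1} \Big(\big\|\sup_\tau|E_0^j R_\tau f|\big\|_{L^2(w)} + \sqrt{\log(N+1)} \Big\|\big(\textstyle\sum_t \sup_\tau|R_\tau P_t^j f|^2\big)^{1/2}\Big\|_{L^2(w)}\Big),
\]
where $E_0^j$ is the low-frequency projection in $\xi_j$ complementary to $\sum_t P_t^j$, and the exponent $\gamma_1$ is eventually absorbed into $\gamma_p$.

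It then remains to bound $\|\sup_\tau|E_0^j R_\tau f|\|_{L^2(w)} \lesssim [w]_{A_2^*}^{\gamma_2}\|f\|_{L^2(w)}$. Since $E_0^j$ is convolution with a Schwartz kernel in the $x_j$-direction and commutes with the Fourier multiplier $R_\tau$, one expects a pointwise domination $|E_0^j R_\tau f| \lesssim M_j(R_\tau f)$ by the one-dimensional Hardy-Littlewood maximal function. The naive argument that results from exchanging sup and maximal operator is circular, as it reintroduces $\sup_\tau|R_\tau f|$ on the right-hand side; the resolution, carried out explicitly in \cite{DPPimrn}*{Proposition 5.2}, exploits the uniform $L^2(w)$-boundedness of the $R_\tau$ applied to the frequency-localized input $E_0^j f$, closing the estimate through a smoothing-and-absorption argument. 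Controlling this low-frequency contribution without introducing additional $\sqrt{\log N}$ factors is the main technical obstacle.
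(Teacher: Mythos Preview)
The paper does not supply its own proof of this proposition: it simply refers the reader to \cite{DPPimrn}*{Proposition 5.2} and \cite{Dem}. Your sketch follows exactly the standard route taken in those references --- extrapolate to $p=2$ via Proposition~\ref{prop:extrap} with $\Omega=\{e_1,\ldots,e_n\}$, then apply the one-dimensional weighted Chang--Wilson--Wolff good-$\lambda$ inequality fiberwise in $x_j$ --- and you yourself ultimately defer the delicate point to \cite{DPPimrn}*{Proposition 5.2}. So your approach and the paper's (cited) approach coincide.

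One small imprecision worth flagging: since $\{P_t^j\}_{t\in\mathbb Z}$ is a full two-sided Littlewood--Paley decomposition with $\sum_{t\in\mathbb Z}p(2^{-t}\xi_j)=1$ for $\xi_j\neq 0$, there is no genuine ``complementary low-frequency projection $E_0^j$'' left over. What actually appears in the Chang--Wilson--Wolff argument is the dyadic maximal function term $\|\mathrm M_j(\sup_\tau|R_\tau f|)\|_{L^2(w)}$ coming from the good-$\lambda$ inequality; this is indeed circular at first sight, and is dealt with by choosing the exponential constant small enough (via $\epsilon\sim(\log N)^{-1/2}$) to absorb it into the left-hand side, using the a priori finiteness guaranteed by the trivial bound $\sum_\tau\|R_\tau f\|_{L^2(w)}$. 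Your description of the resolution as ``smoothing-and-absorption'' captures the right idea, though the role you assign to $E_0^j f$ is not quite how the argument runs.
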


\section{The proof of Theorem~\ref{thm:main}}\label{sec:proof} This section is dedicated to the proof of our main theorem. We remember that $m\in C^\infty(\R^n\backslash\{0\})$ and $T_v$ is the directional multiplier operator
\[
T_vf(x)=\int_{\mathbb{R}^n}\widehat{f}(\xi)m(\xi \cdot v)e^{ix\cdot \xi} \, d\xi,\qquad x\in \R^n,
\]
while for any $\Omega\subset {\mathbb{S}^{n-1}}$ we have defined $T_\Omega f = \sup_{v\in\Omega}|T_v f|$. By the extrapolation result of Proposition~\ref{prop:extrap} the proof of the statement
\[
\sup_{\substack{O \subset \Omega\\\#O=N}}\|T_O f\|_p\lesssim  (\log N)^{1/2}\|f\|_p,\quad p\in (1,\infty),
\]
is reduced to proving that for all $\Omega\subset {\mathbb{S}^{n-1}}$ which are lacunary of some order $L\geq 1$ and all directional weights $w\in A_2 ^\Omega$ we have
\begin{equation}\label{eq:main}
	\sup_{\substack{O \subset \Omega\\\#O=N}}\|T_O f\|_{L^2(w)} \lesssim [w]_{A_2 ^\Omega} ^\gamma (\log N)^{1/2}\|f\|_{L^2(w)}
\end{equation}
for some $\gamma>0$ depending upon dimension  and the order of lacunarity of $\Omega$.
		

\subsection{The main splitting} The whole proof is guided by the following splitting of the operator $T_v$ into two pieces. The first   contains the singularity of $\xi \mapsto m( \xi \cdot v)$, with the   complementary piece given by a Nagel-Stein-Wainger cone as in \S\ref{sec:NSW}
	%
\begin{equation} \label{e:mainsplit}\begin{split}
&  |T_v f(x)|\leq   |T_v W_vf(x)|+  |T_v(\mathrm{Id}-W_v)f(x)|
  \eqqcolon  |T_v ^{\mathrm{in}}f(x)|+ |T_v ^{\mathrm{out}}f(x)|,\qquad x\in\R^n.
\end{split}
\end{equation}
Recall that $W_v$ is defined in \S\ref{sec:NSW}. Surprisingly, the singular inner part is the easiest to deal with, and we treat it first.

\subsubsection*{The inner part} For fixed $v\in O\subset \Omega$ there exists a unique $\cic{\ell}\in \Z^\Sigma$ such that $v\in \Omega_{\cic{\ell}}$. Fixing such $v$ and $\cic{\ell}$ and using Lemma~\ref{lem:cover} we readily see that
{
\[
\begin{split}
|T_v ^{\mathrm{in}}f(x)| & =  \Big|\sum_{\varnothing\neq U\subseteq \Sigma }(-1)^{|U|+1} T_vW_vK_{U,\cic{\ell}}f(x)\Big| \lesssim  \sup_{\varnothing \neq U\subseteq \Sigma} \Big(\sum_{\cic{\ell}}  \sup_{u\in O\cap \Omega_{\cic{\ell}}} |T_{u} W_{u} K_{U,\cic{\ell}}f (x)|^2\Big)^\frac12
\\
& =   \sup_{\varnothing \neq U\subseteq \Sigma}   \Big(\sum_{\cic{\ell}} |T^{\mathrm{in}} _{O\cap \Omega_{\cic{\ell}}} K_{U,\cic{\ell}}f (x)|^2\Big)^\frac12 ,
\end{split}
\]
with implicit constant depending upon dimension,} and where we have implicitly defined the maximal operator
\begin{equation}
\label{e:innerpart}
 T_O ^{\mathrm{in}}f\coloneqq \sup_{v\in O} |T_v ^{\mathrm{in}}f|=\sup_{v\in O}|T_v W_v f|.
\end{equation}
Taking $L^2(w)$-norms and using the weighted vector-valued bound of Lemma~\ref{lem:weightedPRwedge}
\[
\begin{split}
  \left\|T_O ^{\mathrm{in}}f\right\|_{L^2(w)} ^2 &\lesssim [w]_{A_2 ^\Omega} ^{2\gamma_1} \sup_{\cic{\ell}\in\Z^\Sigma}\left\|T^{\mathrm{in}} _{O\cap \Omega_{\cic{\ell}}} \right\|_{L^2(w)} ^2 \|f\|_{L^2(w)} ^2 \\ & \lesssim  [w]_{A_2 ^\Omega} ^{\gamma_2}   \left\|W _{\Omega} \right\|_{L^2(w)} ^2 \sup_{\sigma\in\Sigma}\sup_{\ell \in \Z} \left\|T _{O_{\sigma,\ell}} \right\|_{L^2(w)} ^2 \|f\|_{L^2(w)} ^2.
\end{split}
\]
Inserting the maximal inequality of Lemma~\ref{lem:l2wNSWmax} in the display above proves the recursive estimate
\begin{equation} \label{e:recest}
\|T_O ^{\mathrm{in}}\|_{L^2(w)}   \lesssim  [w]_{A_2 ^\Omega} ^{\tilde \gamma} \sup_{\sigma\in\Sigma}\sup_{\ell \in \Z} \|T _{O_{\sigma,\ell}} \|_{L^2(w)}  
\end{equation}
for some exponent $\tilde \gamma$ depending only on the lacunarity order of $\Omega$ and the dimension.

\subsubsection*{The outer part} Let $\varphi$ to be a bump function on $\mathbb{R}$ such that $\varphi\equiv 0$ on $[-1/4,1/4]$ and $\varphi\equiv 1$ on $(-1/2,1/2)^c$, and define 
\[
\varphi_{v}^j\left(\xi\right)\coloneqq \varphi\left(\frac{nv_j\xi_j}{\|(v\xi)\|}\right),\qquad \xi=(\xi_1,\ldots,\xi_n)\in\R^n\backslash\{0\};
\]
from here on, $(v\xi)$ denotes the vector $(v_1\xi_1,\dots,v_n\xi_n)$. Observe that on $\R^n\backslash\{0\}$ we have
\begin{equation}
\label{e:etas}
\begin{split}
 & 1=  \varphi_{v}^1+ \left( \sum_{j=2}^{n-1}  \varphi_{v}^j \prod_{1\leq \ell <j}  (1-\varphi_{v}^\ell )\right)  + \prod_{1\leq \ell <n}  (1-\varphi_{v}^\ell )
 \eqqcolon  \eta_{v}^1 +\left( \sum_{j=2}^{n-1} \eta_{v}^j\right)+\eta_{v}^n .
\end{split}
\end{equation}
Therefore, we can further split the operator $T_v ^\mathrm{out}=T_v(\mathrm{Id}-W_v)$ into $n$ pieces, 
\begin{equation}\label{eq:decomp}
T_v ^{\mathrm{out}} f=\sum_{j=1}^n T_v ^{\mathrm{out}} N_{v}^j f,
\end{equation}
where each $N_{v}^j$ is the Fourier multiplier with symbol $\eta_{j,v}$. 

The heart of the proof for the outer part is the content of the following lemma which provides a pointwise control of the operators $T_v ^{\mathrm{out}} N_{v}^j P^j _t$ by suitable averages which are independent of the direction. Here $P _t ^j$ is a coordinate-wise Littlewood-Paley projection which is defined as in the discussion preceding Lemma~\ref{prop:weightedcww}. That is, 
\[
(P^j _t f)^\wedge (\xi) \coloneqq p(2^{-t}\xi_j)\hat f(\xi),\qquad \xi=(\xi_1,\ldots,\xi_n)\in\R^n\backslash\{0\},\quad t\in\Z,
\]
with $\mathrm{supp}(p)\subseteq \{\xi \in \R:\, \frac12 <|\xi|<2\}$. We will need to superimpose another Littlewood-Paley decomposition on top of $\{P^j_t\}$. To this aim,  consider a smooth function $q$ on $\R$ such that
\[
\mathrm{supp}(q)\subseteq \{\xi\in\R:\, \frac14<|\xi|<4\},\qquad q\equiv 1\quad \text{on}\quad \{\frac12<|\xi|<2\},
\]
and 
\[
\sum_{t\in\Z}q(2^{-t}\xi){=1},\qquad \xi\in\R^n\backslash\{0\}.
\]
In the statement of the lemma below, $\M_{\mathsf{str}}$ denotes the strong maximal function in $\R^n$, with respect to our fixed choice of coordinates 

\begin{lemma}\label{lem:pointwise} For $v\in {\mathbb{S}^{n-1}}$ and $j=1,\dots,n$, we have the pointwise estimate
\[
|T_v^{\mathrm{out}} N_v^j P^j_t f(x)|\lesssim \M_{\mathsf{str}}( P^j_tf)(x)
\]
with implicit constant depending only upon dimension.
\end{lemma}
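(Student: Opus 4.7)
The strategy is to realize $T_v^{\mathrm{out}} N_v^j P^j_t$ as convolution with a kernel $K$ that is an $L^1$-normalized product approximation of the identity adapted to a $v$- and $t$-dependent rectangle in frequency space, after which the pointwise domination by $\M_{\mathsf{str}}(P^j_t f)$ is routine. The key point is that the combined frequency localizations forced by $\eta_v^j$, $1-\omega(v\xi)$ and $p(2^{-t}\xi_j)$ pin the relevant scales to
\[
a_j:=2^t,\qquad a_\ell:=(v_j/v_\ell)\,2^t\quad(\ell\neq j),
\]
and make every factor of the symbol behave like a classical Marcinkiewicz multiplier on the rectangle with these side lengths.

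I would begin by localizing the support of
\[
M(\xi):=m(\xi\cdot v)\,\bigl(1-\omega(v\xi)\bigr)\,\eta_v^j(\xi)\,p(2^{-t}\xi_j).
\]
From the definition of $\varphi_v^j$, on the support of $\eta_v^j$ the quantity $|v_j\xi_j|$ is comparable to $\|(v\xi)\|$; combined with $|\xi_j|\approx 2^t$ coming from $p(2^{-t}\xi_j)$, this yields $|\xi_\ell|\lesssim a_\ell$ for every $\ell$. Moreover, on the support of $1-\omega(v\xi)$ one has $|\xi\cdot v|\gtrsim \|(v\xi)\|\gtrsim v_j 2^t$, which keeps the H\"ormander--Mikhlin symbol $m$ away from its singularity and comparable in scale to $v_j 2^t$. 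I would then verify the Marcinkiewicz bound $|\partial^\alpha M(\xi)|\lesssim_\alpha\prod_\ell a_\ell^{-\alpha_\ell}$ on this support: every $\xi_\ell$-derivative of $m(\xi\cdot v)$, $\omega(v\xi)$ or $\eta_v^j(\xi)$ produces, via the chain rule, a factor of $v_\ell$ paired with an inverse power of either $|\xi\cdot v|$ or $\|(v\xi)\|$, both of order $v_j 2^t$, which is precisely $a_\ell^{-1}$; each $\xi_j$-derivative of $p(2^{-t}\xi_j)$ yields $a_j^{-1}$.

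Standard $N$-fold integration by parts against $e^{ix\cdot\xi}$ then gives the kernel bound
\[
|K(x)|\lesssim_N \prod_{\ell=1}^{n}\frac{a_\ell}{(1+a_\ell|x_\ell|)^N},
\]
uniformly in $v$ and $t$. Such an $L^1$-normalized axis-aligned product approximation of the identity dominates its convolution pointwise by $\M_{\mathsf{str}}$, completing the proof. I expect the main technical obstacle to be ensuring genuine uniformity in $v$, since the aspect ratios $v_j/v_\ell$ can be arbitrarily large; it is precisely the support constraint imposed by $\eta_v^j$---namely that the coordinate $|v_j\xi_j|$ dominates---that makes every factor in $M$ scale correctly in this anisotropic Marcinkiewicz setup, yielding a bound independent of $v$.
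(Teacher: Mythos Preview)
Your proposal is correct and follows essentially the same route as the paper: localize the support via $\eta_v^j$ and $p(2^{-t}\xi_j)$, verify the anisotropic Marcinkiewicz bounds $|\partial^\alpha M(\xi)|\lesssim \prod_\ell a_\ell^{-\alpha_\ell}$ for each factor, integrate by parts to obtain the product kernel estimate, and conclude via the strong maximal function. The one technical point you glossed over is that your $K=\check M$ is the kernel of the \emph{full} operator, so the bound you obtain is $|K*f|\lesssim \M_{\mathsf{str}}(f)$, not $\M_{\mathsf{str}}(P_t^j f)$; the paper fixes this by inserting a fattened cutoff $q(2^{-t}\xi_j)$ with $q\equiv 1$ on $\mathrm{supp}\,p$, so that $T_v^{\mathrm{out}} N_v^j P^j_t f = \Phi_v * (P^j_t f)$ where $\Phi_v$ (built with $q$ in place of $p$) obeys the identical kernel bound.
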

\begin{proof} For $v\in\mathbb S^{n-1}$ call 
\[
\Phi_v(x)\coloneqq \int_{\mathbb{R}^n} m(v\cdot\xi)(1-\omega_v(\xi))\eta_v^j(\xi)q(2^{-t}\xi_j ) e^{ix\cdot\xi}\, d\xi,\qquad x\in\R^n.
\]
Remember that $v\in \Omega_{\ell}$ means that for every pair $\sigma=(k,j) $ with $1\le k<j\le n$ we have that $v_j/v_k\sim 2^{-\ell_{(k,j)}}$. Now for a general pair $(k,j)$, call $\ell_{kj}\coloneqq \ell_{(k,j)}$ if $k<j$ and $\ell_{kj}\coloneqq -\ell_{(j,k)}$ if $k>j$. Set also $\ell_{kk}=0$. 
		
From the construction of $\varphi_v^j$, and the definition \eqref{e:etas} of $\eta_v^j$,  it follows that
\[
\xi\in \mathrm{supp}\,\eta_v^j\implies \|(v\xi)\|\lesssim |v_j\xi_j|.
\]
Then, for $k=1,\dots,n$,
\[
|\xi_k|\le\frac{\|(v\xi)\|}{v_k}\lesssim \frac{v_j}{v_k}|\xi_j|\lesssim 2^{t-\ell_{kj}},
\]
which shows that $|\Phi_v(x)|\lesssim \prod_{k=1}^n 2^{t-\ell_{kj}}$. 
			
We proceed to show suitable derivative estimates for the Fourier transform of $\Phi$. Without further mention,   estimates \eqref{eq:derivomega0}, \eqref{eq:derivomega}, and \eqref{eq:derivomega1} are meant to hold for $\xi \in \mathrm{supp} \, \widehat \Phi$, and $\alpha_1,\dots,\alpha_n$ will denote non negative integers with $\alpha=\alpha_1+\dots+\alpha_n$. Firstly,
\begin{equation}\label{eq:derivomega0}
|\partial^{\alpha_1}_{\xi_1}\dots\partial^{\alpha_n}_{\xi_n}\eta_j^v(\xi)|\lesssim\left(\frac{v_1}{v_j}\right)^{\alpha_1}\cdots\left(\frac{v_n}{v_j}\right)^{\alpha_n}\frac{1}{|\xi_j|^{\alpha}} \lesssim \prod_{k=1}^n 2^{{\alpha_k(\ell_{kj}-t)}}.
\end{equation}
It is not difficult to see that $\omega_v$ will satisfy the same derivative estimates, namely 
\begin{equation}\label{eq:derivomega}
|\partial^{\alpha_1}_{\xi_1}\dots\partial^{\alpha_n}_{\xi_n}\omega_v(\xi)|\lesssim \left(\frac{v_1}{\|(v\xi)\|}\right)^{\alpha_1}\dots \left(\frac{v_n}{\|(v\xi)\|}\right)^{\alpha_n}\lesssim \prod_{k=1}^n 2^{\alpha_k({\ell_{kj}-t})}.
\end{equation}
Note that estimate \eqref{eq:derivomega} above was already implicitly used in the proof of Lemma~\ref{lem:l2wNSW}.		
Finally, we have to consider the derivatives of $\xi \mapsto m(\xi \cdot v)$:
\[
|\partial^{\alpha_1}_{\xi_1}\dots\partial^{\alpha_n}_{\xi_n}m(\xi \cdot v)|\le |m^{(\alpha)}(v\cdot\xi)|v_1^{\alpha_1}\dots v_n^{\alpha_n}\lesssim \left(\frac{v_1}{|v\cdot\xi|}\right)^{\alpha_1}\dots\left(\frac{v_n}{|v\cdot\xi|}\right)^{\alpha_n}.
\]
Observe that, since we are taking $\xi\in\mathrm{supp}(1-\omega_v)$, we have that 
\[
|v\cdot\xi|\ge\frac{1}{2n^2}\|(v\xi)\|\gtrsim|v_j\xi_j|
\]
so that as before
\begin{equation}\label{eq:derivomega1}
|\partial^{\alpha_1}_{\xi_1}\dots\partial^{\alpha_n}_{\xi_n}m(v\cdot\xi)|\lesssim \prod_{k=1}^n 2^{\alpha_k({\ell_{kj}-t})}.
\end{equation}
Combining \eqref{eq:derivomega0}, \eqref{eq:derivomega}, and \eqref{eq:derivomega1} {together with a standard integration by parts argument} leads to the bound
\[
|\Phi_v(x)|\lesssim  \prod_{k=1}^n \frac{2^{t-\ell_{kj}}}{(1+2^{t-\ell_{kj}} |x_k|)^2}, 
\]
whence
\[
|T_v ^\mathrm{out}  {N_v ^j }P^j_t f(x)|=|T_v ^\mathrm{out}  {N_v ^j } Q_t ^j P^j_t f(x)|=|\Phi_v*(P^j_t f)(x)|\lesssim \M_{\mathsf{str}}(P^j_t f)(x)
\]
as desired. 
\end{proof}

\subsubsection*{Completing the proof} Recall the main splitting for $T_v$ and the estimate for the inner part. We can  then write, for each $O\subset\Omega$ with $\#O=N$, the estimate
\[
\|T_Of\|_{L^2(w)}\leq B[w]_{A_2 ^\Omega} ^\gamma \sup_{\sigma\in\Sigma}\sup_{\ell\in\Z}\|T_{O_{\sigma,\ell}}\|_{L^2(w)}{\|f\|_{L^2(w)}}+\big\|\sup_{v\in O}|T_v ^\mathrm{out}f|\big\|_{L^2(w)},
\]
where $B $ denotes the implicit constant in the bound \eqref{e:recest}.
{
Using the decomposition \eqref{eq:decomp} and Proposition~\ref{prop:weightedcww}  the second summand can be further estimated as follows
\begin{equation}
\label{e:ao2}
\begin{split}
	&  \Big\|\sup_{v\in O}|T_v ^\mathrm{out}f|\Big\|_{L^2(w)} \lesssim_n\sup_{1\leq j \leq n} \Big\|\sup_{v\in O}| T_v ^\mathrm{out}N_v ^j  f|\Big\|_{L^2(w)} 
	\\
&\quad \lesssim  \sqrt{\log N}[w]_{A_2 ^\Omega} ^{\beta'} \sup_{1\leq j \leq n} \Bigg\|\bigg(\sum_{t\in \Z}   \sup_{v\in O}|P_t ^j (T_v ^\mathrm{out}N_v^j f ) |  ^2 \bigg)^\frac12\Bigg\|_{L^2(w)}
\\
	& \quad  \lesssim \sqrt{\log N}[w]_{A_2 ^\Omega} ^{\beta'} \sup_{1\leq j \leq n} \Bigg\|\bigg(\sum_{t\in \Z}    \M_{\mathsf{str}}(P_t ^j  f)   ^2 \bigg)^\frac12\Bigg\|_{L^2(w)}   \lesssim \sqrt{\log N}[w]_{A_2 ^\Omega} ^{\beta'} \|f\|_{L^2(w)}.
\end{split}
\end{equation}
In passing to the last line of the estimate above we used Lemma~\ref{lem:pointwise} while the last approximate inequality follows by the weighted vector-valued estimates for $\M_{\mathsf{str}}$ and weighted Littlewood-Paley theory.
}
%

Combining the estimates \eqref{e:recest}, \eqref{e:ao2},  we realize that we have proved the following almost orthogonality principle for the maximal directional multiplier $T_O$.

\begin{theorem}\label{thm:almortho}Let $\Omega \subset {\mathbb{S}^{n-1}}$ be a set of directions which contains the coordinate directions. Then for all $w\in A_p ^\Omega$ and every lacunary dissection  $\{S_{\sigma,\ell}\}$ of $\mathbb S^{n-1}$ we have
\[
\sup_{\substack{O\subseteq \Omega\\ \#O\leq N}} \|T_Of\|_{L^2(w)} \leq B [w]_{A_2 ^\Omega} ^\gamma \Big(\sup_{\sigma\in\Sigma}\sup_{\ell\in\Z}\|T_{O_{\sigma,\ell}}\|_{L^2(w)}+\sqrt{\log N}\Big)	\|f\|_{L^2(w)}
\]
for constants $B,\gamma>0$ depending upon dimension  and the order of the lacunary dissection.
\end{theorem}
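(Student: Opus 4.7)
The theorem is essentially the synthesis of the two estimates developed earlier in this section, so my plan is to assemble them cleanly rather than produce new ideas. Starting from the main splitting \eqref{e:mainsplit}, I obtain the pointwise bound $|T_v f| \leq |T_v^{\mathrm{in}} f| + |T_v^{\mathrm{out}} f|$ for every $v \in O$. Taking the supremum over $v$ and then passing to $L^2(w)$-norms reduces the claim to proving matching bounds for $\|T_O^{\mathrm{in}} f\|_{L^2(w)}$ and for $\|\sup_{v \in O} |T_v^{\mathrm{out}} f|\|_{L^2(w)}$ separately.

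For the inner piece I invoke the recursive bound \eqref{e:recest}, whose derivation combined Lemma~\ref{lem:cover} (rewriting $W_v$ as an inclusion-exclusion sum over Parcet-Rogers wedge projections $K_{U,\cic{\ell}}$), the weighted square function estimate of Lemma~\ref{lem:weightedPRwedge} for wedge multipliers, and the maximal cone bound of Lemma~\ref{lem:l2wNSWmax}. This produces exactly the factor $\sup_{\sigma \in \Sigma} \sup_{\ell \in \mathbb{Z}} \|T_{O_{\sigma,\ell}}\|_{L^2(w)}$ that appears in the conclusion, with a weight-constant dependence of the shape $[w]_{A_2^\Omega}^{\tilde{\gamma}}$.

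For the outer piece I decompose $T_v^{\mathrm{out}} = \sum_{j=1}^n T_v^{\mathrm{out}} N_v^j$ via the partition of unity \eqref{e:etas}, apply the Chang-Wilson-Wolff reduction of Proposition~\ref{prop:weightedcww} along the $j$-th coordinate variable, and replace the resulting operators $T_v^{\mathrm{out}} N_v^j P_t^j$ by the strong maximal function $\M_{\mathsf{str}}(P_t^j f)$ via the pointwise control of Lemma~\ref{lem:pointwise}. Weighted vector-valued Fefferman-Stein estimates for $\M_{\mathsf{str}}$ together with weighted Littlewood-Paley theory then give precisely the chain \eqref{e:ao2}, whose final bound is $\sqrt{\log N}\,[w]_{A_2^\Omega}^{\beta'}\|f\|_{L^2(w)}$.

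Summing the two contributions and taking the worst of the exponents $\tilde\gamma$ and $\beta'$ yields the stated inequality, with $B$ and $\gamma$ depending only on $n$ and the order of the lacunary dissection. The only genuine care needed is bookkeeping: one should ensure that at every stage the weight class $A_2^\Omega$ suffices (which it does since $\Omega$ contains the coordinate directions, so $A_2^\Omega \subseteq A_2^*$ with comparable characteristics), and that the vector-valued and extrapolation arguments do not inflate the dependence on $N$ beyond the single $\sqrt{\log N}$ factor produced by Chang-Wilson-Wolff. No further obstacles arise, since the heart of the work — the inclusion-exclusion geometric covering and the pointwise dominating kernel estimate — has already been carried out in Lemma~\ref{lem:cover} and Lemma~\ref{lem:pointwise}.
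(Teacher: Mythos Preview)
Your proposal is correct and follows essentially the same route as the paper: split via \eqref{e:mainsplit}, control the inner piece by the recursive estimate \eqref{e:recest} (obtained from Lemma~\ref{lem:cover}, Lemma~\ref{lem:weightedPRwedge}, and Lemma~\ref{lem:l2wNSWmax}), and control the outer piece by the chain \eqref{e:ao2} (obtained from \eqref{e:etas}, Proposition~\ref{prop:weightedcww}, Lemma~\ref{lem:pointwise}, and weighted Littlewood--Paley/Fefferman--Stein). Your remark that $A_2^\Omega\subseteq A_2^*$ because $\Omega$ contains the coordinate directions is exactly the reason the auxiliary lemmas, stated for $A_2^*$, apply here.
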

Our main result Theorem~\ref{thm:main} may be easily derived from Theorem \ref{thm:almortho} by means of the following steps.    First, Theorem \ref{thm:almortho} upgrades to the  $L^2(w)$-estimate
\[\sup_{\substack{O\subseteq \Omega\\ \#O\leq N}} \|T_Of\|_{L^2(w)} \lesssim_L [w]_{A_2 ^\Omega} ^{L\gamma}\sqrt{\log N}	\|f\|_{L^2(w)}
\]
  when $\Omega\subset {\mathbb{S}^{n-1}}$ is a lacunary set of order $L\geq 1$. This is obtained  by induction on the order of lacunarity $L$. Indeed, the case $L=0$ is immediate, as a $0$-th order lacunary set contains exactly one direction. The inductive step follows by using the definition of lacunarity and the almost orthogonality principle of Theorem~\ref{thm:almortho}. Finally the $L^p(w)$-estimate of Theorem~\ref{thm:main} for $p\in(1,\infty)$ is a consequence of  the $L^2(w)$-estimate just proved and the extrapolation result of Proposition~\ref{prop:extrap}.

\section{Concluding remarks and open questions} \label{s:q} In this concluding section we tie back our results to the question of $L^p$-bounds for the Hilbert transform along variable Lipschitz lines by describing a few directions of future investigation.
\subsection{Hilbert transform along lacunary-valued, Lipschitz-truncated fields}

In this context, a natural analogue of Stein's vector field problem described in the introduction is   to ask for sufficient, and possibly necessary conditions on the choice of directions $x\mapsto v(x)$  for the $L^2(w)$ or $L^p$-boundedness of the linearized operator
\[
f\mapsto T_{v(x)} f(x)
\]
\emph{under the assumption that the vector field $v$ takes values in a lacunary set $\Omega$.}  We refer to this question below as \emph{the lacunary vector field problem}. While the latter is undeniably a simpler question then the more renowned unrestricted version, it has the advantage of removing  obstacles related to Besicovitch sets, which, at least in dimension three and higher,  are far from being completely understood.

A closer look at the proof of Theorem \ref{thm:main} shows that   the $L^p$-bound for the inner part \eqref{e:innerpart}, 
as well as  the square function estimate 
\[
 \sup_{1\leq j \leq n} \Bigg\|\bigg(\sum_{t\in \Z}  \sup_{v\in \Omega}|T_v ^\mathrm{out}N_v^j  P_t ^j f  |  ^2 \bigg)^\frac12\Bigg\|_{p} \lesssim    \|f\| _{p}, \qquad 1<p<\infty,
\]
hold with no dependence on the cardinality of $\Omega$, while such dependence must necessarily enter the full operator. One possible sufficient condition in  the lacunary vector field problem is     that $T_{v(\cdot)}$ almost commutes with Littlewood-Paley projections, for instance in the form 
\begin{equation}
\label{e:decouple}
\left\|T_{v(\cdot)} ^\mathrm{out}N_{v(\cdot)}^j f
 \right\|_{p} \lesssim \|f\|_p +  \Bigg\|\bigg(\sum_{t\in \Z}   |T_{v(\cdot)} ^\mathrm{out}N_{v(\cdot)}^j   P_t ^j f  |  ^2 \bigg)^\frac12\Bigg\|_{p},   
 \qquad 1\leq j \leq n,
\end{equation}
 for $ 1<p<\infty.$
This estimate, with $\sqrt{\log N}$ loss, has been obtained via the Chang-Wilson-Wolff inequality in the finite cardinality setting. In dimension two, if we drop the lacunary-valued requirement and instead ask that the vector field $v(\cdot)$ has small Lipschitz constant, and the multiplier entering the definition of $T$ is a truncation of the Hilbert transform at unit  scales,  an almost-commuting estimate of the above type holds for the full operator $T_{v(\cdot)}$; see \cite{DPGTZK}. 

In \cite{GT}, Guo and Thiele have  shown that a sufficient condition for the lacunary vector field estimate to hold when $n=2$ is that $v(x)=  \exp(2\pi i 2^{k(x)})$ where $k(x)=\lfloor\log \lambda(x)\rfloor $ is the truncation of a Lipschitz function $\lambda:\R^2 \to (0,1]$. Note that $v$ takes values in a first order lacunary sequence: a generalization to higher order lacunary-valued    Lipschitz truncated vector fields is given in \cite{DPP}. Both works proceed by establishing, more or less explicitly, analogues of \eqref{e:decouple}, with the simplification that in effect only one Littlewood-Paley decomposition is relevant in dimension two. Our approach to Theorem \ref{thm:main} suggests  that a proof of  \eqref{e:decouple} for suitably defined lacunary-valued    Lipschitz truncated vector fields is feasible, and would lead to sufficient conditions for the lacunary vector field problem in higher dimensions.

\subsection{Extensions to bi-parameter, non-translation invariant kernels} The directional multiplier  $T_v$ of \eqref{e:defTv}  may be thought of as a convolution with a singular kernel which is the tensor product of the one-variable kernel $K=\widehat m$ in direction $v$ with the Dirac delta in the $n-1$ coordinates of $v^\perp$, and may thus be thought of as a bi-parameter, translation invariant Calder\'on-Zygmund kernel. It is then natural to ask whether suitable extensions of Theorem \ref{thm:main} and related results may hold for bi-parameter, and possibly non-translation invariant analogs of \eqref{e:defTv}. A rather  general formulation in this context is the following: let $K$ be a smooth function on  $\R^{1+(n-1)} \times \R^{1+(n-1)}$ minus its diagonal, satisfying standard bi-parameter Calder\'on-Zygmund type assumptions, see for instance \cite[Section 2.1]{HenriBiPar}.  For each $v\in \Omega\subset \mathbb{S}^{n-1}$, let $R_v$ be the rotation mapping $\mathrm{span}\, \{v\}$ to $\R \times \{\vec 0_{\R^{n-1}}\}$ and $v^\perp $ to $\{0\} \times \R^{n-1}$. The interest then lies in the sharp cardinality bounds for the maximal directional singular integral on $\R^n$
\[
T_O f(x) \coloneqq \sup_{v\in O} \left|\mathrm{p.v.} \int_{\R^{1+(n-1)}} f(t,s) K( R_v x, R_v(t,s) ) \, \mathrm{d} t\mathrm{d} s\right|, \qquad x\in \R^n,
\]
when $O$ is a finite subset of a lacunary set $\Omega$.
The translation invariant case, where $K$ is the Fourier transform of a bi-parameter H\"ormander-Mikhlin multiplier, may be more tractable within the tools developed in this article.   Finally, we remark  that  sharp estimates for bi-parameter directional square functions have recently appeared in \cite{DPPMeyer}.
\bibliography{GenLac}
\bibliographystyle{amsplain}
\end{document}

